\documentclass[aps,pre,onecolumn,superscriptaddress,amsmath,amssymb,10pt,showkeys]{revtex4-2}
\usepackage{amsthm}
\usepackage{amsfonts}
\usepackage{mathtools}
\usepackage{enumitem}
\usepackage{mdframed}
\usepackage[hidelinks]{hyperref}
\AddToHook{env/thebibliography/begin}{\raggedright}

\newtheorem{theorem}{Theorem}[section]
\newtheorem{lemma}[theorem]{Lemma}
\newtheorem{proposition}[theorem]{Proposition}
\newtheorem{remark}[theorem]{Remark}

\newtheorem{example}[theorem]{Example}

\DeclareMathOperator{\dist}{dist}

\makeatletter
\renewenvironment{proof}[1][\proofname]{\par
  \pushQED{\qed}%
  \normalfont \topsep6\p@\@plus6\p@\relax
  \trivlist
  \item\relax
  {\bfseries
  #1\@addpunct{.}}\hspace\labelsep\ignorespaces
}{%
  \popQED\endtrivlist\@endpefalse
}
\makeatother

\begin{document}
\title{Gaussian Fluctuations of the Gradient of the Fr\'echet Function and Nonstandard Asymptotics of Sample Fr\'echet Means}
\author{Yutaro Nakabayashi}
\affiliation{Graduate School of Mathematical Sciences,
The University of Tokyo,
3-8-1 Komaba, Meguro-ku,
Tokyo 153-8914, Japan}
\date{October 20, 2026}
\begin{abstract}
We establish a central limit theorem for the gradient of the
population Fr\'echet function in local Euclidean coordinates
on a metric space.
Assuming a unique population mean, a consistent measurable
sequence of sample means, semiconcavity and moment bounds
for the squared distance functions, and local $C^{1,\alpha}$
regularity of the population Fr\'echet function, we prove
an asymptotic linear representation for the gradient of the population Fr\'echet function
evaluated at the sample mean.
The leading term is the negative empirical average of the
gradients of the squared distance functions at the population
mean; these gradients exist almost surely under our assumptions.
On connected complete Riemannian manifolds, uniqueness of
the population mean, existence of a $(2+\eta)$-moment, and local
$C^{1,\alpha}$ regularity suffice, and the leading term
is expressed through the logarithm map.
To control the empirical process uniformly over a neighborhood
where squared distance functions may be nonsmooth,
we use one-sided partial derivatives of the squared distance functions.
The key estimate is a polynomial bound on the $L^2$ bracketing
numbers of the resulting function classes.
We also give a sufficient condition for the regularity of the 
population Fr\'echet function through probability bounds for
neighborhoods of cut loci.
We then show how the local behavior of the population Fr\'echet function transforms 
these Gaussian fluctuations into limit distributions for suitably rescaled sample Fr\'echet means, 
allowing for anisotropic scaling and non-Gaussian limits. 
Polynomial, logarithmic, and oscillatory examples illustrate the applicability and limitations of this approach.
\end{abstract}
\keywords{Fr\'echet means; central limit theorem;
nonstandard asymptotics;
metric spaces}
\maketitle
\section{Introduction}

A population Fr\'echet mean extends the expectation of a Euclidean random variable to a metric-space-valued random variable by minimizing expected squared distance; 
replacing the expectation by an empirical average gives a sample Fr\'echet mean~\cite{Frechet1948}. 
The geometry of the underlying space can lead to nonuniqueness of Fr\'echet means; see Remark~\ref{rem:basic-assumptions}.
When the population Fr\'echet mean is unique, a basic question is whether sample Fr\'echet means converge to it. 
An early strong law for expected figures in quasi-metric spaces is due to Ziezold~\cite{Ziezold1977}. 
On manifolds, Bhattacharya and Patrangenaru established consistency of intrinsic and extrinsic sample Fr\'echet means~\cite{BhattacharyaPatrangenaru2003}. 
Their Part I also derived asymptotic distributions for extrinsic sample Fr\'echet means, 
while Part II established a central limit theorem for intrinsic sample Fr\'echet means and further central limit theorems for extrinsic sample Fr\'echet means~\cite{BhattacharyaPatrangenaru2003,BhattacharyaPatrangenaru2005}. 
Subsequent extensions cover independent, non-identically distributed manifold-valued random variables and broader non-Euclidean spaces~\cite{KendallLe2011,BhattacharyaLin2017}.

In Euclidean space, the squared distance function $p\mapsto |p-q|^2/2$ is smooth for every $q$. 
On a Riemannian manifold, the corresponding squared distance function $p\mapsto d(p,q)^2/2$ may fail to be differentiable 
when $p$ lies in the cut locus of $q$. 
This complicates a Taylor expansion of the gradient of the empirical
Fr\'echet function, which must be evaluated at the random sample Fr\'echet mean. 
Nevertheless, under suitable differentiability and integrability conditions, 
and when the Hessian of the population Fr\'echet function at the population Fr\'echet mean is nonsingular, 
a classical $\sqrt{n}$-CLT for intrinsic sample Fr\'echet means has been established~\cite{BhattacharyaPatrangenaru2005}.

Bhattacharya and Lin relaxed earlier support restrictions by controlling probability mass near cut loci~\cite{BhattacharyaLin2017}. 
Eltzner, Galaz-Garc\'ia, Huckemann, and Tuschmann clarified the role of cut-locus 
stability and obtained a CLT on closed Riemannian manifolds~\cite{EltznerGalazGarciaHuckemannTuschmann2021}. 
Under regularity assumptions on the distribution, Hotz, Le, and Wood treated compact manifolds even when the support meets cut loci of codimension one or two, 
and identified an additional linearization term in the codimension-one case~\cite{HotzLeWood2024}. 
In a different direction, Eltzner, Hansen, Huckemann, and Sommer
introduced diffusion means based on the heat kernel and
studied their asymptotic behavior, including smeariness
\cite{EltznerHansenHuckemannSommer2023}.
Using heat-kernel smoothing on compact Riemannian manifolds,
Cao \cite{Cao2026} obtains a CLT for sample
Fr\'echet means under absolute continuity, uniform convergence
of the smoothed population Hessians, integrability of the
Hessians, and a positive definite population Hessian.

Earlier, Le and Barden established zero probability of the cut locus
under an additional condition on minimizing geodesics \cite{LeBarden2014}.
Lytchak and Huckemann subsequently proved that, for a random
variable with a finite second moment on a connected complete
Riemannian manifold, the cut locus of any Fr\'echet mean has
probability zero \cite{LytchakHuckemann2025}.
They also showed that, under suitable assumptions,
differentiability of an integral of semiconcave functions
implies almost sure differentiability of the integrands.
We record this implication in local Euclidean coordinates.
The main analytic task is to obtain uniform empirical-process
control near the population mean, so that the random sample
mean can be used as an index.

There is a second, logically distinct obstruction to the classical central
limit theorem: the population Fr\'echet function may be too flat at its
minimum. On the circle, Hotz and Huckemann~\cite{HotzHuckemann2015} showed that
mass near the antipode can produce convergence rates of sample Fr\'echet means slower than
$n^{-1/2}$. Eltzner and Huckemann~\cite{EltznerHuckemann2019} developed a
general smeary central limit theorem and constructed higher-dimensional
spherical examples in which a nonlinear power of the sample Fr\'echet mean, rather
than the sample Fr\'echet mean itself, has a Gaussian limit.  Such phenomena are not
merely artifacts of isolated asymptotic examples: finite-sample smeariness can
distort classical inference over large sample-size regimes~\cite{HundrieserEltznerHuckemann2024}, 
and recent minimax lower bounds quantify the difficulty of
estimating Fr\'echet means near distributions with
nonunique population Fr\'echet means~\cite{HundrieserEltznerHuckemann2026}.  These results
suggest separating the stochastic fluctuation, which may still be Gaussian,
from the deterministic local geometry that converts this fluctuation into a
displacement of the minimizer.

We establish an asymptotic linear representation for the gradient
of the population Fr\'echet function evaluated at a sample Fr\'echet mean,
without assuming an invertible population Hessian.
In local Euclidean coordinates on a metric space, we obtain the
uniform empirical-process control needed for this representation
by proving polynomial $L^2$-bracketing bounds for classes of
one-sided partial derivatives of squared distance functions.
These bounds follow from semiconcavity, an integrable bound on
the semiconcavity constants, a random Lipschitz bound in
$L^{2+\eta}(P)$, and local $C^{1,\alpha}$ regularity of the
population Fr\'echet function.
Related empirical-process comparisons between sample and population
means appear in Hotz, Le, and Wood~\cite{HotzLeWood2024},
where tangent vectors are transported to the population mean.
Our representation requires no prescribed power expansion,
such as that used to derive limits under power-law normalizations
in \cite{EltznerHuckemann2019}.
We treat its conversion into a limit for the sample mean
in a separate scaling theorem.

We first establish the asymptotic linear representation in local Euclidean
coordinates on a metric space.
Let $\mu$ be the unique population Fr\'echet mean and let
$\phi$ be a coordinate homeomorphism near $\mu$ with
$\phi(\mu)=0$.
Write
\begin{equation*}
\rho_q(x)=\frac12d\left(\phi^{-1}(x),q\right)^2,
\qquad
f(x)=F\left(\phi^{-1}(x)\right).
\end{equation*}
For a consistent measurable sequence of sample Fr\'echet means,
let $x_n$ be its coordinate representation, set equal to $0$
outside a fixed coordinate neighborhood.
Under the assumptions on semiconcavity, moments,
and regularity of the population Fr\'echet function
stated in Section~\ref{sec:setting-main-result},
$\rho_X$ is almost surely differentiable at $0$, and
\begin{equation*}
\sqrt{n}\nabla f(x_n)
=-\frac1{\sqrt{n}}\sum_{i=1}^n\nabla\rho_{X_i}(0)
+o_{\mathbb{P}}(1).
\end{equation*}
Consequently, $\sqrt{n}\nabla f(x_n)$ has a centered Gaussian
limit with covariance
$\operatorname{Cov}\left(\nabla\rho_X(0)\right)$.

On a connected complete Riemannian manifold, the geometric
assumptions hold automatically in normal coordinates.
If the population mean is unique, the distribution has a
$(2+\eta)$-moment for some $\eta>0$, and the population
Fr\'echet function is locally $C^{1,\alpha}$ for some
$\alpha\in(0,1]$, the expansion becomes
\begin{equation*}
\sqrt{n}\nabla f(x_n)
=\frac1{\sqrt{n}}\sum_{i=1}^n\log_\mu(X_i)
+o_{\mathbb{P}}(1),
\end{equation*}
where $f=F\circ \exp_\mu$.
If $F\circ\exp_\mu$ has a nonsingular Hessian at $0$,
the classical $\sqrt{n}$-CLT for $x_n$ follows;
see Remark~\ref{rem:classical-clt}.

The central limit theorem for the gradient of the population Fr\'echet function
is useful precisely because it does not
presuppose a rate for the sample Fr\'echet mean.  We prove a general scaling theorem:
if linear isomorphisms $A_n$ satisfy $\|A_n^{-1}\|_{\mathrm{op}}\to0$ and the
rescaled gradients of the population Fr\'echet function $\sqrt{n}\nabla f(A_n^{-1}y)$
converge locally uniformly to a homeomorphism $\Lambda$, then
$A_nx_n$ converges to $\Lambda^{-1}(\zeta)$, where $\zeta$ is the limiting
Gaussian random vector in $\mathbb{R}^m$.
This permits different convergence rates in different coordinates.
In an example on a flat torus, a mixed term couples directions
corresponding to positive and zero eigenvalues of the population Hessian,
producing a non-Gaussian limit under $\sqrt{n}$ scaling even in the
direction with positive eigenvalue.
The logarithmically smeary
circular examples of Hundrieser, Eltzner, and Huckemann
\cite{HundrieserEltznerHuckemann2026} satisfy the central limit theorem for the
gradient of the population Fr\'echet function
although, under their natural logarithmic normalization, these
rescaled gradients have no finite locally uniform limit. Finally, we construct a circular example with a locally
$C^{1,1}$ population Fr\'echet function for which $\sqrt{n} x_n$ is tight but has
no limiting distribution; in fact, no deterministic linear normalization
produces a nondegenerate limit.
The Gaussian limit of the population gradient nevertheless
remains valid. Under the stated assumptions, this limit
describes a common Gaussian fluctuation across examples with
power, logarithmic, and oscillatory sample-mean asymptotics.

The paper is organized as follows.
Section~\ref{sec:setting-main-result} introduces the metric-space
framework and states the gradient central limit theorem and
the scaling theorem.
Section~\ref{sec:directional-representation} develops the
one-sided derivative representation and proves differentiability
at the sample mean, giving an empirical-process identity
for the gradient of the population Fr\'echet function.
Section~\ref{sec:empirical-process-bounds} proves the bracketing
and asymptotic equicontinuity estimates and completes the proof
of the gradient central limit theorem.
Section~\ref{sec:scaling-limits} proves the scaling theorem.
Section~\ref{sec:riemannian-manifolds} verifies the assumptions
on complete Riemannian manifolds and gives a sufficient
condition for the regularity of the population Fr\'echet function.
Section~\ref{sec:examples} presents the anisotropic, logarithmic,
and oscillatory examples.

\section{Framework and main results}
\label{sec:setting-main-result}
Let $(Q,d)$ be a metric space equipped with its Borel
$\sigma$-algebra.
Let $X$ be a $Q$-valued random variable defined on a probability
space $(\Omega,\mathcal{F},\mathbb{P})$, and let
$X_1,X_2,\ldots$ be independent copies of $X$.
Their common distribution on $Q$ is denoted by $P$, and
$\mathbb{E}$ denotes expectation with respect to $\mathbb{P}$.
Assume that, for some $o\in Q$,
\begin{equation*}
\mathbb{E}\left[d(o,X)^2\right]<\infty.
\end{equation*}
By the triangle inequality, this condition then holds with
any other base point in place of $o$.

For each $q\in Q$, call $p\mapsto d(p,q)^2/2$ the squared
distance function centered at $q$.
Its expectation and empirical average are the population
and empirical Fr\'echet functions, respectively:
\begin{equation*}
F(p)\coloneqq\frac12\mathbb{E}\left[d(p,X)^2\right],
\qquad
F_n(p)\coloneqq\frac{1}{2n}\sum_{i=1}^n d(p,X_i)^2,
\qquad p\in Q.
\end{equation*}
A minimizer of $F$ is called a population Fr\'echet mean,
and a minimizer of $F_n$ is called a sample Fr\'echet mean.

Let
\begin{equation*}
P_n\coloneqq\frac1n\sum_{i=1}^n\delta_{X_i}
\end{equation*}
be the empirical measure.
For an integrable scalar- or vector-valued function $h$,
we write
\begin{equation*}
Ph\coloneqq\int_Q h(q)\,P(dq)
=\mathbb{E}\left[h(X)\right],
\qquad
P_nh\coloneqq\frac1n\sum_{i=1}^n h(X_i),
\end{equation*}
and set
\begin{equation*}
\left\|h\right\|_{P,2}
\coloneqq
\left(\mathbb{E}\left[|h(X)|^2\right]\right)^{1/2}.
\end{equation*}
Here and below, $|\cdot|$ denotes the Euclidean norm and we write $B_r(x)$ for the open Euclidean ball of radius $r$
centered at $x$.

We impose the following assumptions.

\begin{enumerate}[
  label=\textup{(A\arabic*)},
  ref=A\arabic*,
  leftmargin=*
]

\item\label{ass:local-coordinates-consistency}
\textbf{Consistency of sample Fr\'echet means.}
The function $F$ has a unique minimizer $\mu\in Q$.
There exists a sequence $\{\mu_n\}_{n\geq1}$ of $Q$-valued
random variables satisfying
\begin{equation*}
\mu_n\in\operatorname*{argmin}_{p\in Q}F_n(p)
\qquad\text{a.s.},\ n\ge1.
\end{equation*}
We assume that
\begin{equation*}
d(\mu_n,\mu)\xrightarrow{\mathbb{P}}0.
\end{equation*}

\item\label{ass:locally-Euclidean}
\textbf{Local Euclidean structure near $\mu$.}
There exist an integer $m\geq1$, a radius $r_0>0$,
an open neighborhood $U\subset Q$ of $\mu$, and
a homeomorphism
\begin{equation*}
\phi\colon U\longrightarrow B_{r_0}(0)\subset\mathbb{R}^m
\end{equation*}
such that $\phi(\mu)=0$.
For $q\in Q$ and $x\in B_{r_0}(0)$, define
\begin{equation*}
\rho_q(x)\coloneqq\frac12d\left(\phi^{-1}(x),q\right)^2,
\end{equation*}
and set
\begin{equation*}
f(x)\coloneqq F\left(\phi^{-1}(x)\right)
=\mathbb{E}\left[\rho_X(x)\right],
\qquad
f_n(x)\coloneqq F_n\left(\phi^{-1}(x)\right)
=\frac1n\sum_{i=1}^n\rho_{X_i}(x).
\end{equation*}

\item\label{ass:semiconcavity-moments}
\textbf{Semiconcavity and moment bounds.}
There exist Borel measurable functions
$c,L\colon Q\to[0,\infty)$ such that, for every $q\in Q$,
the function
\begin{equation*}
x\longmapsto \rho_q(x)-\frac{c(q)}2|x|^2
\end{equation*}
is concave on $B_{r_0}(0)$, and
\begin{equation*}
\left|\rho_q(x)-\rho_q(y)\right|
\leq L(q)|x-y|,
\qquad x,y\in B_{r_0}(0).
\end{equation*}
Moreover, for some $\eta>0$,
\begin{equation*}
\mathbb{E}\left[c(X)\right]<\infty,
\qquad
\mathbb{E}\left[L(X)^{2+\eta}\right]<\infty.
\end{equation*}

\item\label{ass:population-regularity}
\textbf{Regularity of the population Fr\'echet function.}
For some $\alpha\in(0,1]$ and $r_1\in(0,r_0)$,
\begin{equation*}
f\in C^{1,\alpha}\left(B_{r_1}(0)\right).
\end{equation*}

\end{enumerate}

\begin{remark}\label{rem:basic-assumptions}
\begin{enumerate}[label=\textup{(\roman*)},leftmargin=*]
\item
Existence and uniqueness of Fr\'echet means are not automatic
on general metric spaces, even under the second-moment
assumption.
For example, let $Q=\mathbb{R}\setminus\{0\}$ with the
Euclidean distance and
$P=(\delta_{-1}+\delta_1)/2$.
Then
\begin{equation*}
F(p)=\frac12(p^2+1),\qquad p\in Q,
\end{equation*}
so the infimum of $F$ is not attained.
Uniqueness may fail even on compact spaces: the uniform
distribution on the circle with its geodesic distance
has a constant Fr\'echet function.
In \textup{(A1)}, uniqueness is required only for the
population mean; sample means may be nonunique.

\item
Measurable sample means can be constructed under standard
measurable selection hypotheses.
For instance, suppose that $(Q,d)$ is complete and separable
and that, for every $(q_1,\ldots,q_n)\in Q^n$, the set
\begin{equation*}
\operatorname*{argmin}_{p\in Q}
\frac{1}{2n}\sum_{i=1}^n d(p,q_i)^2
\end{equation*}
is nonempty and compact.
The joint continuity of the objective function and
separability of $Q$ imply that the graph of this
minimizer correspondence is Borel.
The Arsenin--Kunugui theorem therefore yields a Borel
measurable map $m_n\colon Q^n\to Q$ selecting a minimizer;
see \cite[Theorem~18.18]{Kechris1995}.
Thus $\mu_n=m_n(X_1,\ldots,X_n)$ is a measurable sample
Fr\'echet mean.

\item
If $(Q,d)$ is proper, meaning that every closed bounded
subset is compact, the second-moment assumption ensures
that $F$ and every $F_n$ have nonempty compact minimizer
sets.
Moreover, a proper metric space is complete and separable,
so the preceding measurable selection argument applies.
If the population mean $\mu$ is unique, every measurable
sequence of sample Fr\'echet means satisfies
\begin{equation*}
d(\mu_n,\mu)\longrightarrow0
\qquad\text{almost surely};
\end{equation*}
see \cite{BhattacharyaPatrangenaru2003}.
Hence, in this setting, the consistency requirement in
\textup{(A1)} follows from the other assumptions.

\item
Assumption \textup{(A2)} imposes a substantial restriction
on the local topology at $\mu$.
In particular, an open book with at least three pages
is not locally Euclidean at points of its spine.
Thus the present framework does not directly cover
sticky means on the spine of an open book, as studied
in \cite{HotzEtAl2013}.
The restriction is local to $\mu$ and allows singularities
elsewhere in $Q$.

\item
For complete connected Riemannian manifolds,
Proposition~\ref{prop:frechet-c1alpha-cut-neighborhood}
provides a sufficient condition for \textup{(A4)}.
Under the moment condition stated there, a locally uniform
bound on the probability that a base point lies close to
the cut locus of $X$ implies the required local
$C^{1,\alpha}$ regularity of the population Fr\'echet function.

\end{enumerate}
\end{remark}

Throughout the paper, $\nabla f$ denotes the gradient
with respect to the standard Euclidean inner product
on $\mathbb{R}^m$.
The notation $o_{\mathbb{P}}(1)$ and $O_{\mathbb{P}}(1)$
refers to convergence to zero in probability and
boundedness in probability, respectively.

Fix a deterministic radius $r\in(0,r_1)$ and define
\begin{equation*}
x_n\coloneqq
\begin{cases}
\phi(\mu_n),
& \mu_n\in\phi^{-1}\left(B_r(0)\right),\\
0,
& \mu_n\notin\phi^{-1}\left(B_r(0)\right).
\end{cases}
\end{equation*}
Assumptions
\textup{(\ref{ass:local-coordinates-consistency})}
and \textup{(\ref{ass:locally-Euclidean})}
imply that
\begin{equation*}
\mathbb{P}\left(
\mu_n\in\phi^{-1}\left(B_r(0)\right)
\right)\longrightarrow1,
\qquad
x_n\xrightarrow{\mathbb{P}}0.
\end{equation*}

We now state the main result.

\begin{theorem}
\label{thm:main-gradient-clt}
Assume
\textup{(\ref{ass:local-coordinates-consistency})}--%
\textup{(\ref{ass:population-regularity})}.
Then $\rho_q$ is differentiable at $0$ for $P$-almost
every $q\in Q$.
Define $\nabla\rho_q(0)=0$ whenever $\rho_q$ is not
differentiable at $0$.
Then $\nabla \rho_\cdot(0)\colon Q\to\mathbb{R}^m$ is Borel measurable and
\begin{equation*}
\mathbb{E}\left[\nabla\rho_X(0)\right]=0,
\qquad
\mathbb{E}\left[|\nabla \rho_X(0)|^2\right]<\infty.
\end{equation*}
Moreover,
\begin{equation}\label{eq:main-asymptotic-expansion}
\sqrt{n}\nabla f(x_n)
=
-\frac1{\sqrt{n}}\sum_{i=1}^n\nabla\rho_{X_i}(0)
+
o_{\mathbb{P}}(1)
\end{equation}
in $\mathbb{R}^m$.
Consequently,
\begin{equation}\label{eq:main-gradient-clt}
\sqrt{n}\nabla f(x_n)
\xrightarrow{d}
\mathcal{N}_{\mathbb{R}^m}(0,\Sigma),
\qquad
\Sigma\coloneqq\operatorname{Cov}\left(\nabla \rho_X(0)\right).
\end{equation}
\end{theorem}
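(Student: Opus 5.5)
The plan is to work entirely in the chart $B_{r_0}(0)$ and exploit semiconcavity. First, almost sure differentiability at $0$. For each $q$, the function $\rho_q - \frac{c(q)}2|x|^2$ is concave, so $\rho_q$ has one-sided directional derivatives $\partial_v^+\rho_q(x)$ in every direction. These satisfy $\partial_v^+\rho_q(x)+\partial_{-v}^+\rho_q(x)\le 0$, with equality exactly when $\rho_q$ is differentiable along $\pm v$. Since $|\partial_v^+\rho_q|\le L(q)$ and $L\in L^1$, dominated convergence applied to the difference quotients (monotone after subtracting the quadratic part) gives $\partial_v^+ f(0)=\mathbb{E}[\partial_v^+\rho_X(0)]$. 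Because $f$ is differentiable at $0$ by (A4), we get $\mathbb{E}[\partial_v^+\rho_X(0)+\partial_{-v}^+\rho_X(0)]=0$. As the integrand is nonpositive, it vanishes a.s. Applying this to $v=\pm e_j$ and using that a semiconcave function with linear one-sided derivatives in coordinate directions is differentiable (its superdifferential is a singleton), we obtain differentiability of $\rho_X$ at $0$ a.s. Measurability of $q\mapsto\partial^+_{v}\rho_q(0)$ follows because it is a limit of difference quotients along a countable sequence, and each quotient is continuous in $q$. The bound $|\nabla\rho_X(0)|\le L(X)$ gives square integrability. Finally, $\mathbb{E}\nabla\rho_X(0)=\nabla f(0)=0$, since $0$ minimizes $f$.

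Second, an empirical-process identity at $x_n$. Since $x_n$ minimizes $f_n$ on $B_r(0)$ with probability tending to one, for each direction $v$ we have $0\le\partial_v^+ f_n(x_n)=P_n\partial_v^+\rho_\cdot(x_n)$. Also $\partial_v^+ f(x)=P\partial_v^+\rho_\cdot(x)$ holds for all $x\in B_{r_1}(0)$, as in the first step, and $f$ is differentiable there. Hence
\begin{equation*}
\langle\nabla f(x_n),v\rangle\le -(P_n-P)\partial_v^+\rho_\cdot(x_n)\quad\text{and}\quad \langle\nabla f(x_n),v\rangle\ge (P_n-P)\partial_{-v}^+\rho_\cdot(x_n).
\end{equation*}
It therefore suffices to show, for $g_{x,v}(q)=\partial_v^+\rho_q(x)$ with $v=\pm e_j$,
\begin{equation*}
\sqrt n(P_n-P)\bigl(g_{x_n,v}-g_{0,v}\bigr)=o_{\mathbb{P}}(1),
\end{equation*}
because $g_{0,v}=\langle\nabla\rho_\cdot(0),v\rangle$ a.s. and the two one-sided bounds then sandwich $\sqrt n\langle\nabla f(x_n),e_j\rangle$.

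Third, asymptotic equicontinuity. Since $x_n\to0$ in probability, the standard argument (e.g. van der Vaart--Wellner, via a bracketing entropy integral with envelope $L\in L^{2+\eta}$) reduces the claim to two facts about the class $\{g_{x,v}:|x|\le\delta\}$: a polynomial bound $N_{[\,]}(\varepsilon)\lesssim\varepsilon^{-K}$, and $\|g_{x,v}-g_{0,v}\|_{P,2}\to0$ as $x\to0$. This is the main obstacle, because $g_{x,v}$ is discontinuous in $x$ where $\rho_q$ is nonsmooth.

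I would build the brackets using the monotonicity from semiconcavity. For concave $h$, difference quotients give
\begin{equation*}
\frac{h(x+tv)-h(x)}{t}\le\partial_v^+h(x)\le \frac{h(x)-h(x-sv)}{s}.
\end{equation*}
Applying this to $\rho_q-\frac{c(q)}2|\cdot|^2$ traps $g_{x,v}(q)$, for $x$ in a small cube of a grid of mesh $\epsilon$, between functions built from $\rho_q$ at grid points plus error terms of order $c(q)\,\epsilon$. The $L^1$ width of such a bracket is controlled by $f$'s difference quotients. By (A4) these differ from $\partial_v f$ by $O(t^\alpha+\epsilon/t)$, plus $\mathbb{E}c\cdot t$. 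Choosing $t=\epsilon^{1/(1+\alpha)}$ gives $L^1$ widths polynomial in $\epsilon$. Hölder's inequality with the $L^{2+\eta}$ envelope then converts $L^1$ width $w$ into $L^2$ width $\lesssim w^{\eta/(2(1+\eta))}$, still polynomial. The same sandwich argument at $x\to0$, together with a.s. differentiability at $0$, gives the $L^2$ continuity via dominated convergence. Once both facts hold, the expansion \eqref{eq:main-asymptotic-expansion} follows, and \eqref{eq:main-gradient-clt} is the multivariate CLT combined with Slutsky's lemma.
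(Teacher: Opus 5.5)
Your proposal follows the paper's proof essentially step for step. It uses one-sided derivatives, and the nonpositive zero-mean sum $\partial_v^+\rho_q+\partial_{-v}^+\rho_q$ for almost sure differentiability at $0$. It builds grid brackets from the concave secant bounds plus a Lipschitz transfer, with $L^1$ width controlled by the $C^{1,\alpha}$ second difference of $f$ and upgraded to $L^2$ via the $(2+\eta)$-moment. It then adds $L^2$ continuity at $0$ and asymptotic equicontinuity. Your variations are harmless: coordinate partials plus Rockafellar's partial-derivative criterion instead of a dense set of directions, H\"older interpolation instead of truncation at level $R$, and a one-sided sandwich instead of the exact identity $P_nT_{x_n}=0$.

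Two small fixes are needed:
\begin{itemize}
\item From $P_n\partial_v^+\rho_\cdot(x_n)\ge0$ the sandwich actually reads $-(P_n-P)g_{x_n,v}\le\langle\nabla f(x_n),v\rangle\le(P_n-P)g_{x_n,-v}$. Your inequalities are reversed, but the argument closes the same way.
\item The brackets must be capped at $\pm L$, as the paper does. The $c(q)t$ terms are only integrable, and the H\"older step needs widths dominated by $2L\in L^{2+\eta}(P)$.
\end{itemize}
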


The following theorem gives a limit for the sample Fr\'echet mean
when the gradient of the population Fr\'echet function admits
a locally uniform scaling limit.

\begin{theorem}
\label{thm:gradient-scaling-limit}
Assume \textup{(\ref{ass:local-coordinates-consistency})}--%
\textup{(\ref{ass:population-regularity})}.
Let $A_n$ be deterministic linear isomorphisms of
$\mathbb{R}^m$ satisfying $\|A_n^{-1}\|_{\mathrm{op}}\to0$.
Suppose that there exists a homeomorphism $\Lambda$ of
$\mathbb{R}^m$ such that, for every compact set $K\subset\mathbb{R}^m$,
\begin{equation}\label{eq:gradient-scaling-limit}
\sup_{y\in K}
\left|
\sqrt{n}\nabla f\left(A_n^{-1}y\right)-\Lambda(y)
\right|
\longrightarrow0.
\end{equation}
The expression on the left is well defined for all sufficiently
large $n$, since $A_n^{-1}K\subset B_{r_1}(0)$.
Then
\begin{equation}\label{eq:scaled-frechet-mean-limit}
A_nx_n\xrightarrow{d}\Lambda^{-1}(\zeta),
\qquad
\zeta\sim\mathcal{N}_{\mathbb{R}^m}(0,\Sigma),
\end{equation}
where $\Sigma$ is the covariance matrix in
Theorem~\ref{thm:main-gradient-clt}.
\end{theorem}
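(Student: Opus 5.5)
The plan is to combine Theorem~\ref{thm:main-gradient-clt} with a continuous-mapping argument built on the locally uniform convergence \eqref{eq:gradient-scaling-limit}. Write $Y_n\coloneqq A_nx_n$ and $G_n(y)\coloneqq\sqrt{n}\nabla f(A_n^{-1}y)$, defined for $y$ with $A_n^{-1}y\in B_{r_1}(0)$. Since $x_n\in B_r(0)\subset B_{r_1}(0)$ always, we have
\begin{equation*}
G_n(Y_n)=\sqrt{n}\nabla f(x_n)\xrightarrow{d}\zeta
\end{equation*}
by \eqref{eq:main-gradient-clt}. The goal is to invert $G_n$ and show $Y_n\xrightarrow{d}\Lambda^{-1}(\zeta)$.

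The first and main step is tightness of $Y_n$. Since $\|A_n^{-1}\|_{\mathrm{op}}\to0$ gives no a priori control of $A_nx_n$, tightness has to come from the properness of $\Lambda$. Because $\Lambda$ is a homeomorphism of $\mathbb{R}^m$, it is proper, so $\inf_{|y|\ge R}|\Lambda(y)|\to\infty$ as $R\to\infty$.

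Local uniform convergence alone only controls $G_n$ on compacts, while $Y_n$ might escape to infinity. To exclude this, I will use a degree/topological argument. Fix $M$ and choose $R$ with $|\Lambda(y)|>2M$ for $|y|=R$. For large $n$, $|G_n-\Lambda|<M$ on the sphere $\{|y|=R\}$, so $|G_n|>M$ there. It remains to show $|G_n|>M$ also outside the ball, which needs an additional structural input. The natural one is monotonicity: $f$ is $C^{1,\alpha}$ with a unique minimizer at $0$. This is not convexity, though, so instead I would use a minimization argument directly. Since $\mu_n$ minimizes $F_n$, compare $f_n(x_n)$ with $f_n$ along the ray; in practice this is delicate.

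A cleaner route avoids escape altogether. It suffices to prove that for every bounded continuous $h$, $\mathbb{E}h(Y_n)\to\mathbb{E}h(\Lambda^{-1}(\zeta))$, and I expect to establish this by a Skorokhod-type subsequence argument. Along any subsequence, pick a further subsequence where $G_n(Y_n)\to\zeta$ almost surely (Skorokhod representation). On the event $\{|Y_n|\le R\}$, uniform convergence on $\overline{B_R}$ gives $\Lambda(Y_n)-G_n(Y_n)\to0$, hence $\Lambda(Y_n)\to\zeta$ and $Y_n\to\Lambda^{-1}(\zeta)$ by continuity of $\Lambda^{-1}$. The remaining issue is excluding $|Y_n|>R$ with nonvanishing probability. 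Here I plan a connectedness argument on the segment from $0$ to $Y_n$: the map $t\mapsto G_n(tY_n)$ is continuous, starts at $0$, and, if $|Y_n|>R$, crosses the sphere $|y|=R$ where $|G_n|>2M-o(1)$. This bounds $|G_n|$ on the sphere but not at the endpoint. So escape must be ruled out using the minimizing property, for instance via a one-dimensional argument on the segment where $\langle\nabla f,\cdot\rangle$ governs $f$.

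I expect the main obstacle to be exactly this exclusion of mass escaping to infinity: showing $\mathbb{P}(|A_nx_n|>R)\to0$ uniformly for large $R$. I would first try to bound $f(x_n)-f(0)$ from above by the stochastic order $O_{\mathbb{P}}(n^{-1})$, using the minimizing property together with the empirical-process equicontinuity from Section~\ref{sec:empirical-process-bounds}. I would then compare this with the lower bound $f(A_n^{-1}y)-f(0)\gtrsim n^{-1}\int_0^1\langle\Lambda(ty),y\rangle\,dt$ on the sphere, which is large when $\langle\Lambda(y),y\rangle$ grows. If this fails in general, I would fall back on the degree argument restricted to the event where $\sqrt{n}|\nabla f(x_n)|\le M$.

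Once tightness is established, the identification step is routine. Every subsequential limit $Y$ of $Y_n$ satisfies $\Lambda(Y)\overset{d}{=}\zeta$ by uniform convergence on compacts, and therefore $Y\overset{d}{=}\Lambda^{-1}(\zeta)$.
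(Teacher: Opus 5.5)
Your final identification step and your use of properness of $\Lambda$ are correct and match the paper. The genuine gap is tightness of $A_nx_n$: you rightly single it out as the crux, but none of the routes you sketch establishes it.

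\textbf{Why the proposed routes fail.}
\begin{itemize}
\item The segment argument varies the point $tY_n$ at a fixed level $n$. It shows $|G_n|$ is large where the segment crosses $\{|y|=R\}$, but gives nothing at the endpoint $Y_n$. This is because $G_n$ is controlled only on compacts and need not be injective on $A_nB_{r_1}(0)$.
\item The degree argument has the same defect. Closeness of $G_n$ to $\Lambda$ on $\overline{B_R(0)}$ yields surjectivity onto a ball. It does not exclude other preimages of small values outside $B_R(0)$, and restricting to $\{\sqrt n|\nabla f(x_n)|\le M\}$ does not change that.
\item The energy comparison fails as written, for three reasons.
\begin{itemize}
\item For general $A_n$ one has $f(A_n^{-1}y)-f(0)=n^{-1/2}\int_0^1\langle\Lambda_n(ty),A_n^{-1}y\rangle\,dt$, where $\Lambda_n(y)=\sqrt n\nabla f(A_n^{-1}y)$. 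This is not $n^{-1}\int_0^1\langle\Lambda(ty),y\rangle\,dt$. Nothing forces the correct quantity to grow, since $\Lambda$ is only a homeomorphism; it need not even be a gradient, cf.\ the triangular $\Lambda$ in Example~\ref{ex:coupled-anisotropic-torus}.
\item A lower bound on the sphere would not control $f$ at points with $|A_nx|>R$. That is the escape problem again.
\item The minimizing property only gives $f(x_n)-f(0)\le-n^{-1/2}\mathbb{G}_n\bigl(\rho_\cdot(x_n)-\rho_\cdot(0)\bigr)$ on $E_n$, which is of order $n^{-1/2}|x_n|$. So an $O_{\mathbb P}(n^{-1})$ bound presupposes the rate you are trying to prove.
\end{itemize}
\end{itemize}

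\textbf{The missing idea.} Fix the point and vary the \emph{scale}. Tightness reduces to a deterministic statement: if $u_j\to0$ and $|\sqrt{n_j}\nabla f(u_j)|\le M$, then $\{A_{n_j}u_j\}$ is bounded.
\begin{itemize}
\item For every $k\le n_j$, one has $|\Lambda_k(A_ku_j)|=\sqrt{k/n_j}\,|\sqrt{n_j}\nabla f(u_j)|\le M$.
\item Suppose $|A_{n_j}u_j|\to\infty$. Choose $R$ so that $|\Lambda|>M+1$ outside $B_R(0)$, and let $k_j$ be the first index with $|A_{k_j}u_j|\ge R$.
\item Then $k_j\to\infty$, because $u_j\to0$. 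By minimality, $R\le|A_{k_j}u_j|\le\|A_{k_j}A_{k_j-1}^{-1}\|_{\mathrm{op}}R$.
\item If $A_{k+1}A_k^{-1}\to I$, a limit point $w$ of $A_{k_j}u_j$ satisfies $|w|=R$. Local uniform convergence then gives $|\Lambda(w)|\le M$, a contradiction.
\item The convergence $A_{k+1}A_k^{-1}\to I$ follows from the hypotheses. The identity $\Lambda_{k+1}(A_{k+1}A_k^{-1}y)=\sqrt{(k+1)/k}\,\Lambda_k(y)$, together with injectivity of $\Lambda$, rules out both unboundedness and any subsequential limit other than $I$.
\end{itemize}
Apply this lemma to realizations of $x_n$ in the event $\{|x_n|\le\delta_n,\ |\sqrt n\nabla f(x_n)|\le M\}$, with deterministic $\delta_n\downarrow0$. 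This gives tightness, and your identification argument then completes the proof.
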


The preceding theorem includes the classical central limit
theorem for sample Fr\'echet means as a special case.
\begin{remark}
\label{rem:classical-clt}
Assume \textup{(\ref{ass:local-coordinates-consistency})}--%
\textup{(\ref{ass:population-regularity})}.
Suppose further that $\nabla f$ is differentiable at $0$,
with nonsingular derivative $H$.
Then
\begin{equation*}
\sqrt{n}\,x_n
\xrightarrow{d}
\mathcal{N}_{\mathbb{R}^m}
\left(0,H^{-1}\Sigma(H^{-1})^{\mathsf T}\right).
\end{equation*}
Indeed, taking $A_n=\sqrt{n}\,I$, differentiability of
$\nabla f$ at $0$ gives
\begin{equation*}
\sqrt{n}\nabla f(A_n^{-1}y)\longrightarrow Hy
\end{equation*}
locally uniformly in $y$.
The conclusion follows from
Theorem~\ref{thm:gradient-scaling-limit}
with $\Lambda(y)=Hy$.
\end{remark}

\section{One-sided derivatives and the empirical-process representation}
\label{sec:directional-representation}

Throughout this section and
Section~\ref{sec:empirical-process-bounds}, we assume
\textup{(\ref{ass:local-coordinates-consistency})}--%
\textup{(\ref{ass:population-regularity})}
and use the notation introduced in
Section~\ref{sec:setting-main-result}.
One-sided partial derivatives provide a jointly measurable
family defined even where individual squared distance
functions are not differentiable.

\subsection{Directional derivatives and their bounds}
\label{subsec:directional-calculus}

Choose $r'\in(r,r_1)$ and set
\begin{equation*}
V\coloneqq B_r(0),\qquad K\coloneqq\overline{B_{r'}(0)}.
\end{equation*}
Then $\overline V\subset\operatorname{int}K\subset K
\subset B_{r_1}(0)\subset B_{r_0}(0)$.
For $q\in Q$, define the convex function
\begin{equation}\label{eq:def-convexified-loss}
\psi_q(x)\coloneqq\frac{c(q)}2|x|^2-\rho_q(x),
\qquad x\in B_{r_0}(0).
\end{equation}
Its expectation is finite and convex, with
\begin{equation*}
\psi(x)\coloneqq P\psi_\cdot(x)=\frac{Pc}{2}|x|^2-f(x),
\qquad \nabla\psi(x)=(Pc)x-\nabla f(x).
\end{equation*}
In particular, $\psi\in C^{1,\alpha}(B_{r_1}(0))$.

For $x\in B_{r_0}(0)$ and $v\in\mathbb{R}^m$, define
\begin{equation*}
D\rho_q(x;v)\coloneqq
\lim_{t\downarrow0}\frac{\rho_q(x+tv)-\rho_q(x)}{t}.
\end{equation*}
Semiconcavity and the Lipschitz bound imply that this
finite limit exists, and
\begin{equation}\label{eq:directional-bound}
|D\rho_q(x;v)|\leq L(q)|v|.
\end{equation}
Indeed, subtracting $c(q)|x+tv|^2/2$ reduces existence
to the monotonicity of difference quotients of a concave
function on an interval.
For fixed $x,q$, the map $v\mapsto D\rho_q(x;v)$ is
concave and $L(q)$-Lipschitz.
The function $\rho_q$ is differentiable at $x$
if and only if this map is linear.
The existence and concavity assertions, and the characterization
of differentiability, follow by applying
\cite[Theorems~23.1 and~25.2]{Rockafellar1970}
to the convex function $\psi_q$.
The $L(q)$-Lipschitz property follows directly
from the Lipschitz bound on $\rho_q$.

Let $e_1,\ldots,e_m$ be the standard basis of $\mathbb{R}^m$.
Define
\begin{equation}\label{eq:def-directional-field}
T_{x,j}(q)\coloneqq-D\rho_q(x;e_j),
\qquad
T_x(q)\coloneqq(T_{x,1}(q),\ldots,T_{x,m}(q)),
\qquad x\in V.
\end{equation}

\begin{lemma}\label{lem:directional-field}
The map $(x,q)\mapsto T_x(q)$ is measurable with respect
to $\mathcal{B}(V)\otimes\mathcal{B}(Q)$, and
\begin{equation}\label{eq:directional-field-bound}
|T_{x,j}(q)|\leq L(q),\qquad
|T_x(q)|\leq\sqrt{m}\,L(q).
\end{equation}
For $x\in V$ and $s>0$ such that $x\pm se_j\in B_{r_0}(0)$,
\begin{equation}\label{eq:directional-secant-bounds}
\frac{\rho_q(x-se_j)-\rho_q(x)}s-\frac{c(q)s}{2}
\leq T_{x,j}(q)\leq
\frac{\rho_q(x)-\rho_q(x+se_j)}s+\frac{c(q)s}{2}.
\end{equation}
\end{lemma}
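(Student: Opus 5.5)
The lemma has three parts: joint measurability, the pointwise bound, and the secant bounds. I will treat each in turn; the only step needing care is measurability.

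\emph{Measurability.} I would realize $T_{x,j}(q)$ as a pointwise limit of jointly measurable functions. Fix the sequence $t_k=\delta/k$, where $\delta>0$ is so small that $x+t_ke_j\in B_{r_0}(0)$ for all $x\in V$; such a $\delta$ exists because $\overline V\subset B_{r_0}(0)$. Since the one-sided limit exists, it equals the limit along $t_k$, so
\begin{equation*}
T_{x,j}(q)=-\lim_{k\to\infty}\frac{\rho_q(x+t_ke_j)-\rho_q(x)}{t_k}.
\end{equation*}
It then suffices to show that $(x,q)\mapsto\rho_q(x)=\frac12 d(\phi^{-1}(x),q)^2$ is $\mathcal B(V)\otimes\mathcal B(Q)$-measurable. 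This map is jointly continuous on $V\times Q$, because $\phi^{-1}$ is continuous and $d$ is continuous. A function continuous in $x$ and Borel in $q$ is a Carath\'eodory function, hence product measurable: approximate it in $x$ by a piecewise-constant interpolation over a countable partition of $V$. This route avoids any appeal to separability of $Q$, which would be needed to identify $\mathcal B(V\times Q)$ with the product $\sigma$-algebra. Each difference quotient is then product measurable, and so is their pointwise limit.

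\emph{Bounds.} These are immediate from \eqref{eq:directional-bound} with $v=e_j$, which gives $|T_{x,j}(q)|\le L(q)$. Summing the squares of the $m$ components yields $|T_x(q)|\le\sqrt m\,L(q)$.

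\emph{Secant bounds.} I would use the convex function $\psi_q$ from \eqref{eq:def-convexified-loss}, restricted to the line $t\mapsto g(t)=\psi_q(x+te_j)$ for $t\in[-s,s]$. This $g$ is convex, so its difference quotients are monotone. The right derivative therefore satisfies
\begin{equation*}
\frac{g(0)-g(-s)}{s}\le g'_+(0)\le\frac{g(s)-g(0)}{s}.
\end{equation*}
Next I would compute each piece. The right derivative is $g'_+(0)=c(q)x_j+T_{x,j}(q)$. The increments of $\frac{c}{2}|x+te_j|^2$ are
\begin{equation*}
\tfrac c2|x\pm se_j|^2-\tfrac c2|x|^2=\pm c s x_j+\tfrac c2 s^2.
\end{equation*}
Substituting these into the two inequalities, the $c\,x_j$ terms cancel. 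The upper inequality becomes $T_{x,j}(q)\le(\rho_q(x)-\rho_q(x+se_j))/s+c s/2$. The lower inequality becomes $T_{x,j}(q)\ge(\rho_q(x-se_j)-\rho_q(x))/s-c s/2$. Together these are exactly \eqref{eq:directional-secant-bounds}.

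The main obstacle, such as it is, is the measurability step. It requires checking joint measurability carefully without assuming $Q$ is separable, and using that $V$ is open so that a uniform $\delta$ is available.
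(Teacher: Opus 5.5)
Your proof is correct. For the bounds and the secant inequalities it is the paper's argument: the directional bound with $v=e_j$, and monotone secant slopes of the convex function $t\mapsto\psi_q(x+te_j)$, with the $c(q)x_j$ terms cancelling.

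You diverge from the paper only in the measurability step. You show that the difference quotients are $\mathcal{B}(V)\otimes\mathcal{B}(Q)$-measurable by the Carath\'eodory argument, that is, piecewise-constant approximation in $x$ over countable Borel partitions of $V$ with mesh tending to zero. The paper instead notes that the quotients are jointly continuous and that $\mathcal{B}(V\times Q)=\mathcal{B}(V)\otimes\mathcal{B}(Q)$.

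Your stated reason for avoiding the paper's route is mistaken. That identity needs only a countable base for $V$, not separability of $Q$. Let $\{A_n\}$ be a countable base of $V$ and $W\subset V\times Q$ open, and let $B_n$ be the union of all open $B\subset Q$ with $A_n\times B\subset W$. Then $W=\bigcup_n A_n\times B_n$, a countable union of measurable rectangles.

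Both routes are therefore valid. The paper's is a one-line appeal to this fact and uses joint continuity. Yours needs only continuity in $x$ and Borel measurability in $q$, so it would also work where joint continuity is unavailable.

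A cosmetic point: the shifted points $x+t_ke_j$ may leave $V$. So apply your Carath\'eodory argument on $B_{r_0}(0)\times Q$, or directly to $(x,q)\mapsto\rho_q(x+t_ke_j)$ on $V\times Q$, rather than to $\rho$ on $V\times Q$ only.
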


\begin{proof}
Fix a deterministic sequence $t_k\downarrow0$ with
$t_k<r_0-r$. For all $(x,q)\in V\times Q$,
\begin{equation*}
T_{x,j}(q)=-\lim_{k\to\infty}
\frac{\rho_q(x+t_ke_j)-\rho_q(x)}{t_k}.
\end{equation*}
Each difference quotient is jointly continuous.
Since $V$ has a countable base, the Borel $\sigma$-algebra
of the product topology satisfies
\begin{equation*}
\mathcal{B}(V\times Q)=\mathcal{B}(V)\otimes\mathcal{B}(Q).
\end{equation*}
Thus the difference quotients, and hence their pointwise
limits, are $\mathcal{B}(V)\otimes\mathcal{B}(Q)$-measurable.
The bounds in \eqref{eq:directional-field-bound} follow
from \eqref{eq:directional-bound}.

For the convex function $\psi_q$, the right derivative
along $e_j$ lies between its backward and forward difference quotients:
\begin{equation*}
\frac{\psi_q(x)-\psi_q(x-se_j)}s
\leq D\psi_q(x;e_j)\leq
\frac{\psi_q(x+se_j)-\psi_q(x)}s.
\end{equation*}
Since $D\psi_q(x;e_j)=c(q)\langle x,e_j\rangle+T_{x,j}(q)$,
substituting \eqref{eq:def-convexified-loss} proves
\eqref{eq:directional-secant-bounds}.
\end{proof}

The vector $T_x(q)$ agrees with $-\nabla\rho_q(x)$
where $\rho_q$ is differentiable. Otherwise, it is
the vector of negative one-sided partial derivatives.
The argument below uses only the coordinatewise bounds
and does not require differentiability of $\rho_q$ at $x$.

\subsection{Population expectations and almost sure differentiability}
\label{subsec:population-directional-identification}

The next lemma is based on
\cite[Corollary~2.4]{LytchakHuckemann2025}.
We include the directional argument to record
the measurability and moment properties needed below.

\begin{lemma}\label{lem:population-directional-identification}
For every $x\in V$,
\begin{equation}\label{eq:mean-directional-field}
PT_x=-\nabla f(x).
\end{equation}
For each fixed $x\in V$, the function $\rho_q$ is
differentiable at $x$ for $P$-almost every $q$.
Defining $\nabla\rho_q(x)=0$ on the exceptional set
makes $q\mapsto\nabla\rho_q(x)$ Borel measurable, with
\begin{equation*}
P\nabla\rho_\cdot(x)=\nabla f(x),\qquad
|\nabla\rho_q(x)|\leq L(q).
\end{equation*}
In particular,
\begin{equation*}
T_0(q)=-\nabla\rho_q(0)\quad\text{for $P$-almost every $q$},
\qquad PT_0=0,
\end{equation*}
and $P|\nabla\rho_\cdot(0)|^2<\infty$.
\end{lemma}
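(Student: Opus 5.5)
The plan is to differentiate $f=P\rho_\cdot$ under the integral sign in each coordinate direction, using one-sided difference quotients, and then compare the forward and backward one-sided derivatives. Fix $x\in V$ and $j$, and take $s\downarrow0$ with $x\pm se_j\in B_{r_0}(0)$.

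\textbf{Step 1: the identity \eqref{eq:mean-directional-field}.} By the Lipschitz bound in (\ref{ass:semiconcavity-moments}), the difference quotients $(\rho_q(x+se_j)-\rho_q(x))/s$ are dominated by $L(q)$. Since $PL<\infty$, dominated convergence applies. The quotients converge pointwise to $D\rho_q(x;e_j)=-T_{x,j}(q)$. Their expectations are $(f(x+se_j)-f(x))/s$, which tends to $\partial_jf(x)$ because $f$ is $C^1$ on $B_{r_1}(0)\supset V$ by (\ref{ass:population-regularity}). Hence $PT_{x,j}=-\partial_j f(x)$. Measurability of $q\mapsto T_x(q)$ comes from Lemma~\ref{lem:directional-field}.

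\textbf{Step 2: almost sure differentiability.} Apply the same argument in the direction $-e_j$. This gives $P D\rho_\cdot(x;-e_j)=-\partial_jf(x)$, so
\begin{equation*}
P\bigl[D\rho_\cdot(x;e_j)+D\rho_\cdot(x;-e_j)\bigr]=0.
\end{equation*}
By concavity of $v\mapsto D\rho_q(x;v)$ (the semiconcavity discussion above, via $\psi_q$), we have $D\rho_q(x;e_j)+D\rho_q(x;-e_j)\le D\rho_q(x;0)=0$ pointwise. A nonpositive function with zero integral vanishes $P$-a.s. So for a.e.\ $q$ and every $j$, the one-sided partials in $\pm e_j$ are negatives of each other.

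This is the main obstacle: for a general concave positively homogeneous function, linearity on the coordinate axes does not imply linearity. So I would run the argument over a countable dense set of directions $v$, say with rational coordinates. Each such $v$ gives a null exceptional set, and there are countably many. Outside their union we have $D\rho_q(x;v)=-D\rho_q(x;-v)$ for all rational $v$. Since $v\mapsto D\rho_q(x;v)$ is $L(q)$-Lipschitz, this extends to all $v$.

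It remains to show that this symmetry forces linearity. A concave, positively homogeneous $g$ is superadditive: $g(v+w)\ge g(v)+g(w)$. Applying this to $-v,-w$ and using $g(-u)=-g(u)$ gives $g(v+w)\le g(v)+g(w)$, hence additivity. Together with homogeneity (including negative scalars via oddness), $g$ is linear. By the characterization recalled from \cite[Theorem~25.2]{Rockafellar1970}, $\rho_q$ (equivalently $\psi_q$) is then differentiable at $x$.

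\textbf{Step 3: measurability, bounds and moments.} The exceptional set is measurable: it equals $\{q: \sum_j |D\rho_q(x;e_j)+D\rho_q(x;-e_j)|>0\}$ up to the countable-direction refinement, and by Step 2 it is $P$-null. Alternatively, use the set where the countably many symmetry identities fail, which is Borel by the argument of Lemma~\ref{lem:directional-field}. On its complement $\nabla\rho_q(x)=-T_x(q)$, so $q\mapsto\nabla\rho_q(x)=-T_x(q)\mathbf 1_{\{\text{differentiable}\}}$ is Borel.

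At points of differentiability, the bound $|\nabla\rho_q(x)|\le L(q)$ follows from the Lipschitz constant: take $v=\nabla\rho_q(x)/|\nabla\rho_q(x)|$ in \eqref{eq:directional-bound}. Step 1 then gives $P\nabla\rho_\cdot(x)=\nabla f(x)$.

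At $x=0$, $\nabla f(0)=0$ because $0=\phi(\mu)$ is an interior minimizer of the $C^1$ function $f$. Hence $PT_0=0$. Finally $P|\nabla\rho_\cdot(0)|^2\le PL^2<\infty$ by the $(2+\eta)$-moment in (\ref{ass:semiconcavity-moments}).
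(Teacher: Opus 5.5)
Your proposal is correct and follows essentially the paper's proof. The shared steps are dominated convergence for each direction $v$, the pointwise nonpositive quantity $D\rho_q(x;v)+D\rho_q(x;-v)$ with zero mean, a countable dense set of directions extended by the $L(q)$-Lipschitz property, ``odd and concave implies linear'', Rockafellar's differentiability characterization, and $\nabla\rho_q(x)=-\mathbf{1}_{A_x}(q)T_x(q)$.

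One side remark is wrong, though harmless: the ``main obstacle'' you flag does not exist. Writing $g(v)=\min_{p\in C}\langle p,v\rangle$ with $C$ the compact superdifferential of $g$ at $0$, you get $g(e_j)+g(-e_j)=\min_{C}p_j-\max_{C}p_j$. So symmetry along the coordinate axes already forces $C$ to be a single point, which is the second half of Theorem~25.2 in Rockafellar's book. The dense-set detour, which the paper also uses, is therefore unnecessary but valid. You should still say explicitly that differentiability implies all the symmetry identities, so that the exceptional set is exactly the Borel set you describe.
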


\begin{proof}
Fix $x\in V$ and $v\in\mathbb{R}^m$.
The difference quotients of $\rho_q$ in direction $v$
are bounded by $L(q)|v|$, an integrable function.
Dominated convergence and differentiability of $f$ give
\begin{equation}\label{eq:directional-expectation}
P D\rho_\cdot(x;v)=\langle\nabla f(x),v\rangle.
\end{equation}
Taking $v=e_j$ proves \eqref{eq:mean-directional-field}.

By concavity in the direction variable,
\begin{equation*}
D\rho_q(x;v)+D\rho_q(x;-v)\leq0.
\end{equation*}
Its expectation is zero by \eqref{eq:directional-expectation}.
Thus it vanishes for $P$-almost every $q$, for each $v$.
Take a countable dense set $\mathcal{D}\subset\mathbb{R}^m$
and define the Borel set
\begin{equation*}
A_x\coloneqq\bigcap_{v\in\mathcal{D}}
\{q\in Q\mid D\rho_q(x;v)+D\rho_q(x;-v)=0\}.
\end{equation*}
Each directional derivative is a limit of Borel
difference quotients in $q$, so $A_x$ is Borel and
$P(A_x)=1$.
For $q\in A_x$, continuity in $v$ extends these equalities
to all directions. The concave map $D\rho_q(x;\cdot)$
is then odd, hence also convex, and is therefore linear.
By the directional characterization above, $\rho_q$
is differentiable at $x$. Conversely, differentiability
implies membership in $A_x$, so $A_x$ is exactly the
differentiability set at $x$.
With the convention in the statement,
\begin{equation*}
\nabla\rho_q(x)=-\mathbf{1}_{A_x}(q)T_x(q).
\end{equation*}
This proves measurability and the expectation identity.
At differentiability points the original Lipschitz bound
gives $|\nabla\rho_q(x)|\leq L(q)$; the same bound holds
under the zero convention elsewhere.
Finally, $0$ minimizes $f$, so $\nabla f(0)=0$.
The moment assertion follows from $L\in L^{2+\eta}(P)$.
\end{proof}

The exceptional set in this lemma may depend on the
fixed point $x$. We do not require a common exceptional
set for all $x\in V$: the definition in
\eqref{eq:def-directional-field} applies to every
$(x,q)\in V\times Q$.

\subsection{Differentiability and representation at the sample mean}
\label{subsec:exact-empirical-directional-identity}

Define
\begin{equation*}
E_n\coloneqq\{\mu_n\in\phi^{-1}(V)\}.
\end{equation*}
Then $\mathbb{P}(E_n)\to1$, $x_n\in V$ for every $n$,
and $x_n=\phi(\mu_n)$ on $E_n$.
All statements about empirical minimizers below are
understood outside the common null set on which the
minimizing property in
\textup{(\ref{ass:local-coordinates-consistency})} may fail.

\begin{lemma}\label{lem:exact-empirical-directional}
On $E_n$, the functions $f_n$ and $\rho_{X_i}$,
$i=1,\ldots,n$, are differentiable at $x_n$, and
\begin{equation}\label{eq:empirical-differentiability}
\nabla f_n(x_n)=\frac1n\sum_{i=1}^n\nabla\rho_{X_i}(x_n)=0.
\end{equation}
Consequently, on the same event,
\begin{equation*}
P_nT_{x_n}=0,
\end{equation*}
and
\begin{equation}\label{eq:exact-gradient-identity}
\nabla f(x_n)=\nabla f(x_n)-\nabla f_n(x_n)
=(P_n-P)T_{x_n}.
\end{equation}
\end{lemma}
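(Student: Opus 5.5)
The plan is to work on $E_n$, where $x_n=\phi(\mu_n)\in V$ and $\mu_n$ minimizes $F_n$ over all of $Q$. Hence $x_n$ is a local (indeed global on $B_{r_0}(0)$) minimizer of $f_n$. The key observation is that a minimizer of a finite sum of semiconcave functions forces each summand to be differentiable there. Set $\psi_{X_i}$ as in \eqref{eq:def-convexified-loss} and $C_n\coloneqq\sum_{i}c(X_i)$, so that $n f_n(x)=\frac{C_n}{2}|x|^2-\sum_i\psi_{X_i}(x)$.

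First I use the minimality. For every direction $v$ and $t>0$ small, $f_n(x_n+tv)-f_n(x_n)\ge 0$. Dividing by $t$ and letting $t\downarrow0$, with the one-sided derivatives existing by the discussion preceding \eqref{eq:directional-bound}, gives
\begin{equation*}
\sum_{i=1}^n D\rho_{X_i}(x_n;v)\ge 0\qquad\text{for all } v\in\mathbb{R}^m.
\end{equation*}
Applying this to $v$ and $-v$ and adding yields $\sum_i\bigl(D\rho_{X_i}(x_n;v)+D\rho_{X_i}(x_n;-v)\bigr)\ge0$. By concavity of $v\mapsto D\rho_{X_i}(x_n;v)$, each summand is $\le 0$, so each vanishes. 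Hence each map $D\rho_{X_i}(x_n;\cdot)$ is odd and concave, therefore linear. By the characterization of differentiability recalled before Lemma~\ref{lem:directional-field} (via \cite[Theorem~25.2]{Rockafellar1970} applied to $\psi_{X_i}$), each $\rho_{X_i}$ is differentiable at $x_n$. Then $f_n$ is differentiable at $x_n$ as a finite sum. Its gradient is zero, because $\langle\nabla f_n(x_n),v\rangle\ge0$ for all $v$. This proves \eqref{eq:empirical-differentiability}.

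Next, at points of differentiability, $T_{x_n}(X_i)=-\nabla\rho_{X_i}(x_n)$ by definition \eqref{eq:def-directional-field}, since $D\rho_q(x;e_j)=\partial_j\rho_q(x)$ there. Summing gives $P_nT_{x_n}=-\nabla f_n(x_n)=0$. Finally, Lemma~\ref{lem:population-directional-identification} gives $PT_x=-\nabla f(x)$ for every deterministic $x\in V$. Evaluating this identity at $x=x_n(\omega)$ pointwise in $\omega$ is legitimate, because it holds for every $x\in V$ and no null set depends on $x$. Combining, $(P_n-P)T_{x_n}=0+\nabla f(x_n)$, which is \eqref{eq:exact-gradient-identity}. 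Here $PT_{x_n}$ means $x\mapsto PT_x$ composed with $x_n$.

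The only delicate point is the first step. One must note that $x_n$ minimizes $f_n$ on an open neighborhood in coordinates, since $\phi$ is a homeomorphism onto the open ball and $x_n\in V$. This is what allows two-sided variations in every direction. The rest is the standard argument that a sum of concave, positively homogeneous functions which is nonnegative on $\pm v$ forces each one to be linear. Measurability issues do not arise here, since the identity is pointwise on $E_n$ outside the null set from \textup{(\ref{ass:local-coordinates-consistency})}.
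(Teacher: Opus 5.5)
Your proof is correct and follows essentially the same argument as the paper. Semiconcavity makes each $D\rho_{X_i}(x_n;v)+D\rho_{X_i}(x_n;-v)$ nonpositive while minimality makes their sum nonnegative, so each term vanishes and the odd concave maps are linear; the only variation is that you read the sign of the sum off first-order minimality directly, whereas the paper first proves $\nabla f_n(x_n)=0$ from $0\le f_n(x_n+h)-f_n(x_n)\le(P_nc)|h|^2$, and the remaining steps ($P_nT_{x_n}=-\nabla f_n(x_n)=0$ and pointwise evaluation of $PT_x=-\nabla f(x)$ at $x_n$) coincide.
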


\begin{proof}
Fix a sample in $E_n$. Then $x_n\in V$ and
$f_n(x_n)\leq f_n(x)$ for every $x\in V$.
The concavity assumption in
\textup{(\ref{ass:semiconcavity-moments})} gives, for all
sufficiently small $h$,
\begin{equation*}
f_n(x_n+h)+f_n(x_n-h)-2f_n(x_n)\leq(P_nc)|h|^2.
\end{equation*}
Both increments on the left are nonnegative. Hence
\begin{equation*}
0\leq f_n(x_n+h)-f_n(x_n)\leq(P_nc)|h|^2,
\end{equation*}
which proves differentiability at $x_n$
with $\nabla f_n(x_n)=0$.

For every $v\in\mathbb{R}^m$, finite summation commutes
with directional differentiation, giving
\begin{equation*}
\frac1n\sum_{i=1}^n
\left(D\rho_{X_i}(x_n;v)+D\rho_{X_i}(x_n;-v)\right)=0.
\end{equation*}
The same concavity estimate used above for $f_n$ gives,
for each $i$ and all sufficiently small $t>0$,
\begin{equation*}
\rho_{X_i}(x_n+tv)+\rho_{X_i}(x_n-tv)-2\rho_{X_i}(x_n)
\leq c(X_i)t^2|v|^2.
\end{equation*}
Dividing by $t$ and letting $t\downarrow0$ shows that
each summand is nonpositive. Since their sum is zero,
each vanishes. Thus every $D\rho_{X_i}(x_n;\cdot)$ is odd
and concave, hence linear. The directional characterization
proves differentiability of all $\rho_{X_i}$ at $x_n$,
and differentiation of their finite sum proves
\eqref{eq:empirical-differentiability}.
It follows that $P_nT_{x_n}=-\nabla f_n(x_n)=0$.
Finally, \eqref{eq:mean-directional-field} is an identity
for every $x\in V$, so it may be evaluated at $x_n$.
This gives $PT_{x_n}=-\nabla f(x_n)$ and proves
\eqref{eq:exact-gradient-identity}.
\end{proof}

For an integrable scalar- or vector-valued function $h$,
define
\begin{equation*}
\mathbb{G}_nh\coloneqq\sqrt{n}(P_n-P)h.
\end{equation*}
Equation~\eqref{eq:exact-gradient-identity} gives
$\sqrt{n}\nabla f(x_n)=\mathbb{G}_nT_{x_n}$ on $E_n$.
Since $\mathbb{P}(E_n)\to1$, it follows that
\begin{equation}\label{eq:empirical-process-gradient}
\sqrt{n}\nabla f(x_n)=\mathbb{G}_nT_{x_n}+o_{\mathbb{P}}(1).
\end{equation}
By Lemma~\ref{lem:population-directional-identification},
\begin{equation*}
\mathbb{G}_nT_0=-\frac1{\sqrt{n}}\sum_{i=1}^n
\nabla\rho_{X_i}(0)\qquad\text{almost surely}.
\end{equation*}
Thus the main asymptotic expansion follows once we prove
\begin{equation}\label{eq:random-index-removal-target}
\mathbb{G}_nT_{x_n}-\mathbb{G}_nT_0=o_{\mathbb{P}}(1).
\end{equation}
The Gaussian limit then follows from the multivariate
central limit theorem. We establish
\eqref{eq:random-index-removal-target} in the next section.

\section{Asymptotic equicontinuity of the empirical process}
\label{sec:empirical-process-bounds}

We retain the assumptions and notation of
Section~\ref{sec:directional-representation}.
In particular,
\begin{equation*}
V=B_r(0),
\qquad
K=\overline{B_{r'}(0)},
\qquad
r<r'<r_1,
\end{equation*}
and
\begin{equation*}
T_{x,j}(q)=-D\rho_q(x;e_j),
\qquad
\mathbb{G}_nh=\sqrt{n}(P_n-P)h.
\end{equation*}
Recall that
\begin{equation*}
|T_{x,j}(q)|\leq L(q),\qquad |T_x(q)|\leq\sqrt{m}\,L(q),
\qquad x\in V,\quad q\in Q.
\end{equation*}

We now prove \eqref{eq:random-index-removal-target}.
First, we show that each coordinate class of $T_x$ has
polynomial bracketing numbers and is therefore $P$-Donsker.
Second, we prove that $T_x\to T_0$ in $L^2(P)$ as $x\to0$.
Asymptotic equicontinuity then allows us to replace $x_n$
by $0$ in the empirical process.

\subsection{Bracketing entropy}
\label{subsec:bracketing-donsker}

For Borel measurable functions
$\ell,u\colon Q\to\mathbb{R}$ satisfying $\ell\leq u$,
define the bracket
\begin{equation*}
[\ell,u]
\coloneqq
\left\{
h\colon Q\to\mathbb{R}
\mid
\ell(q)\leq h(q)\leq u(q)
\text{ for every }q\in Q
\right\}.
\end{equation*}
Its $L^2(P)$-width is $\|u-\ell\|_{P,2}$.
For a class $\mathcal{F}$ of real-valued Borel measurable
functions, let
\begin{equation*}
N_{[]}\left(\varepsilon,\mathcal{F},L^2(P)\right)
\end{equation*}
denote the smallest number of brackets of $L^2(P)$-width
at most $\varepsilon$ required to cover $\mathcal{F}$.

We use the bracketing central limit theorem
\cite[Theorem~2.5.6]{VanDerVaartWellner1996},
together with the asymptotic equicontinuity
characterization of Donsker classes
\cite[Sections~1.5 and~2.1]{VanDerVaartWellner1996}.
Probabilities involving possibly nonmeasurable suprema
are understood as outer probabilities, denoted by $\mathbb{P}^{*}$.

\begin{theorem}
\label{thm:bracketing-donsker}
Let $\mathcal{F}$ be a class of real-valued Borel measurable
functions on $Q$ admitting an envelope $H\in L^2(P)$.
Suppose that
\begin{equation*}
\int_0^1
\sqrt{
\log N_{[]}\left(\varepsilon,\mathcal{F},L^2(P)\right)
}
\,d\varepsilon
<\infty.
\end{equation*}
Then $\mathcal{F}$ is $P$-Donsker.
Moreover, writing
\begin{equation*}
d_P(h_1,h_2)
\coloneqq
\|h_1-h_2\|_{P,2},
\end{equation*}
we have, for every $\varepsilon>0$,
\begin{equation}
\label{eq:asymptotic-equicontinuity}
\lim_{\delta\downarrow0}
\limsup_{n\to\infty}
\mathbb{P}^{*}
\left(
\sup_{\substack{
h_1,h_2\in\mathcal{F}\\
d_P(h_1,h_2)\leq\delta
}}
\left|\mathbb{G}_n(h_1-h_2)\right|
>\varepsilon
\right)
=0.
\end{equation}
\end{theorem}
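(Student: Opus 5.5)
This theorem is essentially a restatement of standard empirical-process results, so the plan is to reduce it to \cite[Theorem~2.5.6]{VanDerVaartWellner1996} and \cite[Sections~1.5 and~2.1]{VanDerVaartWellner1996} rather than prove anything from scratch. The reduction has two parts: first the Donsker property, then the asymptotic equicontinuity statement.

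\textbf{Step 1: the Donsker property.} The hypotheses match the bracketing central limit theorem exactly. $\mathcal{F}$ has a square-integrable envelope $H$, and the bracketing entropy integral is finite. I will check that the convention used there for $N_{[]}$ agrees with the one defined above, namely brackets with $L^2(P)$-width at most $\varepsilon$. A change of constant factor, or measurability of the bracket functions, does not affect finiteness of the integral. With that check done, $\mathcal{F}$ is $P$-Donsker.

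\textbf{Step 2: asymptotic equicontinuity with respect to $d_P$.} The characterization in Section~1.5 of \cite{VanDerVaartWellner1996} says that a class is Donsker if and only if two conditions hold: it is totally bounded for some semimetric $\rho$, and $\mathbb{G}_n$ is asymptotically uniformly $\rho$-equicontinuous in outer probability. Section~2.1 adds that one may take $\rho=\rho_P$, the $L^2(P)$-standard deviation semimetric
\begin{equation*}
\rho_P(h)=\bigl(P(h-Ph)^2\bigr)^{1/2}.
\end{equation*}
This gives \eqref{eq:asymptotic-equicontinuity} with $\rho_P$ in place of $d_P$. To pass to $d_P$, I use the pointwise inequality
\begin{equation*}
\rho_P(h_1-h_2)\le d_P(h_1,h_2).
\end{equation*}
It implies that the set of pairs with $d_P\le\delta$ is contained in the set of pairs with $\rho_P\le\delta$. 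The supremum over the smaller set is therefore dominated, and the outer probabilities are monotone. Hence \eqref{eq:asymptotic-equicontinuity} holds as stated.

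\textbf{Main care point.} The only delicate issue is measurability. Because the supremum in \eqref{eq:asymptotic-equicontinuity} may be nonmeasurable, every probability must be read as an outer probability $\mathbb{P}^*$. The cited results are formulated exactly in this outer-probability sense, so no additional measurability hypothesis on $\mathcal{F}$ is needed.
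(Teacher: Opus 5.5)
Your proposal is correct and takes the same route as the paper, which gives no separate proof and simply cites \cite[Theorem~2.5.6]{VanDerVaartWellner1996} for the Donsker property and the equicontinuity characterization of \cite[Sections~1.5 and~2.1]{VanDerVaartWellner1996}. Your remarks that the ``at most $\varepsilon$'' bracket convention does not affect finiteness of the entropy integral, and that $\rho_P(h_1-h_2)\le d_P(h_1,h_2)$ lets the $\rho_P$-equicontinuity pass to $d_P$, make explicit the small steps the paper leaves implicit.
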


For each $j=1,\ldots,m$, set
\begin{equation*}
\mathcal{T}_j
\coloneqq
\left\{T_{x,j}\mid x\in V\right\}.
\end{equation*}

To apply Theorem~\ref{thm:bracketing-donsker} to each
$\mathcal{T}_j$, we derive a polynomial bound on
$N_{[]}(\varepsilon,\mathcal{T}_j,L^2(P))$.

\begin{lemma}
\label{lem:polynomial-bracketing}
For each $j=1,\ldots,m$, there exist constants
$A>0$ and $\beta>0$ such that, for all
sufficiently small $\varepsilon>0$,
\begin{equation}
\label{eq:polynomial-bracketing-number}
N_{[]}\left(\varepsilon,\mathcal{T}_j,L^2(P)\right)
\leq A\varepsilon^{-\beta}.
\end{equation}
Consequently, each $\mathcal{T}_j$ is $P$-Donsker.
\end{lemma}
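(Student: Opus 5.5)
The plan is to build brackets for $T_{x,j}$ directly from the secant bounds \eqref{eq:directional-secant-bounds}, centred at the points of a finite grid in $V$. The $L^2$ width is then controlled using three facts: the Lipschitz bound, the pointwise envelope $|T_{x,j}|\le L$, and the $C^{1,\alpha}$ regularity of $f$. The second difference of $f$ is the only place where cancellation enters.

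\textbf{Construction.} Fix small $\delta>0$ and set $s\coloneqq\delta^{1/2}$, taking $\delta$ small enough that all points used lie in $B_{r_1}(0)$; this is possible because $\overline V\subset B_{r_1}(0)$. Choose a $\delta$-net $\{z_k\}$ of $V$ with at most $C\delta^{-m}$ points. Fix $x\in V$ with $|x-z_k|\le\delta$. Apply \eqref{eq:directional-secant-bounds} at $x$ with step $s$, and replace $x$ by $z_k$ in the difference quotients using the Lipschitz bound. This costs at most $2L(q)\delta/s$. It gives $\ell_k\le T_{x,j}\le u_k$, where
\begin{equation*}
u_k=\frac{\rho_\cdot(z_k)-\rho_\cdot(z_k+se_j)}{s}+\frac{2L\delta}{s}+\frac{cs}{2},
\qquad
\ell_k=\frac{\rho_\cdot(z_k-se_j)-\rho_\cdot(z_k)}{s}-\frac{2L\delta}{s}-\frac{cs}{2}.
\end{equation*}
Since $|T_{x,j}|\le L$ by \eqref{eq:directional-field-bound}, the truncated functions $\max(\ell_k,-L)$ and $\min(u_k,L)$ still bracket every such $T_{x,j}$. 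They are Borel measurable, and the width $w_k$ of the truncated bracket satisfies $0\le w_k\le\min(g_k,2L)$, where
\begin{equation*}
g_k=\frac{2\rho_\cdot(z_k)-\rho_\cdot(z_k+se_j)-\rho_\cdot(z_k-se_j)}{s}+\frac{4L\delta}{s}+cs.
\end{equation*}
Semiconcavity shows that the first term of $g_k$ is at least $-cs$, so $g_k\ge0$. Taking expectations and using $f\in C^{1,\alpha}$, the second difference $2f(z)-f(z+se_j)-f(z-se_j)$ is $O(s^{1+\alpha})$ uniformly in $z\in K$. Hence
\begin{equation*}
Pg_k\le C s^{\alpha}+(Pc)s+4(PL)\delta/s\lesssim\delta^{\alpha/2}.
\end{equation*}

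\textbf{Main obstacle.} The difficulty is passing from this $L^1$ bound to an $L^2$ bound. The function $c$ is only integrable, so $g_k$ need not lie in $L^2$, and the truncation by $L$ is exactly what resolves this. For $\theta\in(0,1]$ one has $\min(a,b)^2\le a^\theta b^{2-\theta}$. Combining this with H\"older's inequality gives
\begin{equation*}
\|w_k\|_{P,2}^2\le (Pg_k)^\theta\,\bigl(P(2L)^{(2-\theta)/(1-\theta)}\bigr)^{1-\theta}.
\end{equation*}
Choose $\theta=\eta/(1+\eta)$, so that $(2-\theta)/(1-\theta)=2+\eta$, which is finite by \textup{(\ref{ass:semiconcavity-moments})}. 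Then $\|w_k\|_{P,2}\le C'\delta^{\alpha\theta/4}$.

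\textbf{Conclusion.} Given $\varepsilon$, take $\delta=(\varepsilon/C')^{4/(\alpha\theta)}$. Then $C\delta^{-m}$ brackets of width at most $\varepsilon$ cover $\mathcal{T}_j$, which gives \eqref{eq:polynomial-bracketing-number} with $\beta=4m/(\alpha\theta)$. A polynomial bound makes $\sqrt{\log N_{[]}}$ grow only like $\sqrt{\log(1/\varepsilon)}$, which is integrable on $(0,1)$. Since $L\in L^2(P)$ is an envelope, Theorem~\ref{thm:bracketing-donsker} yields that $\mathcal{T}_j$ is $P$-Donsker.
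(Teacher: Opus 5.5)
Your proposal is correct and follows essentially the same route as the paper. The paper builds the same truncated secant brackets at the net points, with the same $2L\delta/s$ Lipschitz shift, and gets the same $L^1$ width bound from the second difference of $f$ (phrased there via $\psi$). The only real difference is the passage from $L^1$ to $L^2$. You use $\min(g,2L)^2\le g^\theta(2L)^{2-\theta}$ and H\"older with $\theta=\eta/(1+\eta)$, tying $s=\delta^{1/2}$. The paper instead splits on $\{L\le R\}$, uses $\mathbb{E}[L^2\mathbf{1}_{\{L>R\}}]\le M_\eta R^{-\eta}$, and tunes $R$, $s$, $\delta$ separately; this changes only the value of the exponent $\beta$.
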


\begin{proof}
Fix $j\in\{1,\ldots,m\}$.

\medskip
\noindent\textbf{Step 1. Construction of the brackets.}
For sufficiently small $s>0$ and $0<\delta\leq s$,
we construct at most $C_{\mathrm{net}}\delta^{-m}$
brackets covering $\mathcal{T}_j$, with both endpoints
bounded in absolute value by $L$.
Here $C_{\mathrm{net}}$ is independent of $s$ and $\delta$.

Since $V$ is a bounded subset of $\mathbb{R}^m$,
there exists $C_{\mathrm{net}}>0$ such that,
for all sufficiently small $\delta>0$, a finite set
$\mathcal{A}_\delta\subset V$ satisfies
\begin{equation}
\label{eq:delta-net-cardinality}
V\subset
\bigcup_{a\in\mathcal{A}_\delta}B_\delta(a),
\qquad
\#\mathcal{A}_\delta
\leq C_{\mathrm{net}}\delta^{-m}.
\end{equation}
Choose
\begin{equation}
\label{eq:difference-quotient-scale}
0<s<\min\left\{1,\frac{r'-r}{2}\right\},
\qquad
0<\delta\leq s.
\end{equation}
For $x,a\in V$ with $|x-a|\leq\delta$,
the points $x\pm se_j$ and $a\pm se_j$ lie in $K$.

By \eqref{eq:directional-secant-bounds},
\begin{equation*}
\frac{\rho_q(x-se_j)-\rho_q(x)}s-\frac{c(q)s}{2}
\leq T_{x,j}(q)\leq
\frac{\rho_q(x)-\rho_q(x+se_j)}s+\frac{c(q)s}{2}.
\end{equation*}
By the Lipschitz bound in
\textup{(\ref{ass:semiconcavity-moments})},
replacing $x$ by $a$ in either quotient changes it
by at most $2L(q)\delta/s$.
Together with $|T_{x,j}(q)|\leq L(q)$, this suggests
the following endpoints:
\begin{equation*}
\begin{aligned}
\ell_{a,j}(q)
&\coloneqq
\max\left\{
-L(q),
\frac{\rho_q(a-se_j)-\rho_q(a)}{s}
-\frac{c(q)s}{2}
-\frac{2L(q)\delta}{s}
\right\},\\
u_{a,j}(q)
&\coloneqq
\min\left\{
L(q),
\frac{\rho_q(a)-\rho_q(a+se_j)}{s}
+\frac{c(q)s}{2}
+\frac{2L(q)\delta}{s}
\right\}.
\end{aligned}
\end{equation*}
These functions are Borel measurable and satisfy
\begin{equation*}
\ell_{a,j}(q)
\leq T_{x,j}(q)
\leq u_{a,j}(q),
\qquad q\in Q,
\end{equation*}
whenever $x\in V$ and $|x-a|\leq\delta$.
Moreover, $\ell_{a,j}\leq u_{a,j}$ and
\begin{equation*}
-L(q)\leq\ell_{a,j}(q)\leq u_{a,j}(q)\leq L(q).
\end{equation*}
Thus the endpoints belong to $L^2(P)$.
By \eqref{eq:delta-net-cardinality}, the brackets
$[\ell_{a,j},u_{a,j}]$, $a\in\mathcal{A}_\delta$,
cover $\mathcal{T}_j$.

\medskip
\noindent\textbf{Step 2. Bounds on bracket widths.}
Set
\begin{equation*}
M_\eta
\coloneqq
\mathbb{E}\left[L(X)^{2+\eta}\right]
<\infty.
\end{equation*}
We show that there exists $C_1>0$ such that,
for every $R\geq1$ and for any
$a\in\mathcal{A}_\delta$,
\begin{equation}
\label{eq:L2-bracket-width-bound}
\|u_{a,j}-\ell_{a,j}\|_{P,2}^2
\leq
2RC_1s^\alpha
+
8R\mathbb{E}\left[L(X)\right]\frac{\delta}{s}
+
4M_\eta R^{-\eta}.
\end{equation}

Put $w_{a,j}\coloneqq u_{a,j}-\ell_{a,j}$.
The bounds established in Step~1 give
\begin{equation}
\label{eq:bracket-width-uniform-bound}
0\leq w_{a,j}(q)\leq2L(q).
\end{equation}
Moreover, the maximum in the lower endpoint and
the minimum in the upper endpoint can only decrease
the width. Hence
\begin{align*}
w_{a,j}(q)
&\leq c(q)s-\frac{\rho_q(a+se_j)+\rho_q(a-se_j)-2\rho_q(a)}{s}+\frac{4L(q)\delta}{s}\\
&=
\frac{\psi_q(a+se_j)+\psi_q(a-se_j)-2\psi_q(a)}{s}+\frac{4L(q)\delta}{s}.
\end{align*}
Taking expectations yields
\begin{equation*}
\mathbb{E}\left[w_{a,j}(X)\right]
\leq
\frac{
\psi(a+se_j)+\psi(a-se_j)-2\psi(a)
}{s}
+
4\mathbb{E}\left[L(X)\right]\frac{\delta}{s}.
\end{equation*}

Since
\begin{equation*}
\psi(x)=\frac{Pc}{2}|x|^2-f(x),
\end{equation*}
assumptions
\textup{(\ref{ass:semiconcavity-moments})}
and \textup{(\ref{ass:population-regularity})}
imply that $\psi\in C^{1,\alpha}(B_{r_1}(0))$.
As $K$ is compact, there exists $C_{\mathrm{H}}>0$
such that
\begin{equation*}
|\nabla\psi(u)-\nabla\psi(v)|
\leq C_{\mathrm{H}}|u-v|^\alpha,
\qquad u,v\in K.
\end{equation*}
Consequently,
\begin{align*}
\psi(a+se_j)+\psi(a-se_j)-2\psi(a)
&=\int_0^s
\left\langle
\nabla\psi(a+te_j)-\nabla\psi(a-te_j),e_j
\right\rangle\,dt\\
&\leq\frac{2^\alpha C_{\mathrm{H}}}{1+\alpha}
s^{1+\alpha}.
\end{align*}
Thus, for some $C_1>0$ independent of
$a$, $s$, and $\delta$,
\begin{equation*}
\mathbb{E}\left[w_{a,j}(X)\right]
\leq
C_1s^\alpha
+
4\mathbb{E}\left[L(X)\right]\frac{\delta}{s}.
\end{equation*}

For every $R\geq1$, splitting according to whether
$L(X)\leq R$ and using
\eqref{eq:bracket-width-uniform-bound} gives
\begin{align*}
\mathbb{E}\left[w_{a,j}(X)^2\right]
&\leq
2R\mathbb{E}\left[w_{a,j}(X)\right]
+
4\mathbb{E}\left[
L(X)^2\mathbf{1}_{\{L(X)>R\}}
\right]\\
&\leq
2RC_1s^\alpha
+
8R\mathbb{E}\left[L(X)\right]\frac{\delta}{s}
+
4M_\eta R^{-\eta},
\end{align*}
where the last inequality uses
\begin{equation*}
\mathbb{E}\left[
L(X)^2\mathbf{1}_{\{L(X)>R\}}
\right]
\leq M_\eta R^{-\eta}.
\end{equation*}
This proves \eqref{eq:L2-bracket-width-bound}.

\medskip
\noindent\textbf{Step 3. Choice of parameters.}
We show that there exist constants $A>0$
and $\beta>0$ such that, for every sufficiently
small $\varepsilon>0$, the parameters $R$, $s$,
and $\delta$ can be chosen so that
\begin{equation*}
\|u_{a,j}-\ell_{a,j}\|_{P,2}
\leq\varepsilon,
\qquad a\in\mathcal{A}_\delta,
\end{equation*}
and
\begin{equation*}
\#\mathcal{A}_\delta
\leq A\varepsilon^{-\beta}.
\end{equation*}
These estimates imply
\eqref{eq:polynomial-bracketing-number} by Step~1.

Choose constants $c_R\geq1$ and $c_s,c_\delta>0$
such that
\begin{equation*}
4M_\eta c_R^{-\eta}\leq\frac13,
\qquad
2c_RC_1c_s^\alpha\leq\frac13,
\qquad
8c_R\mathbb{E}\left[L(X)\right]c_\delta
\leq\frac13.
\end{equation*}
For sufficiently small $\varepsilon>0$, set
\begin{equation*}
R\coloneqq c_R\varepsilon^{-2/\eta},
\qquad
s\coloneqq c_s\varepsilon^{(2+2/\eta)/\alpha},
\qquad
\delta\coloneqq
c_\delta\varepsilon^{2+2/\eta}s.
\end{equation*}
Then $R\geq1$ and
\eqref{eq:difference-quotient-scale} holds.
Each term on the right-hand side of
\eqref{eq:L2-bracket-width-bound}
is at most $\varepsilon^2/3$.
Therefore
\begin{equation*}
\|u_{a,j}-\ell_{a,j}\|_{P,2}
\leq\varepsilon,
\qquad a\in\mathcal{A}_\delta.
\end{equation*}

Since
\begin{equation*}
\delta
=
c_sc_\delta
\varepsilon^{(2+2/\eta)(1+1/\alpha)},
\end{equation*}
the cardinality estimate
\eqref{eq:delta-net-cardinality} gives
\begin{equation*}
\#\mathcal{A}_\delta
\leq
C_{\mathrm{net}}(c_sc_\delta)^{-m}
\varepsilon^{-\beta},
\qquad
\beta\coloneqq
m\left(2+\frac{2}{\eta}\right)
\left(1+\frac1\alpha\right).
\end{equation*}
Taking
$A=C_{\mathrm{net}}(c_sc_\delta)^{-m}$
proves \eqref{eq:polynomial-bracketing-number}.

Finally, $L\in L^2(P)$ is an envelope for
$\mathcal{T}_j$, and the polynomial bracketing bound
implies
\begin{equation*}
\int_0^1
\sqrt{
\log N_{[]}\left(\varepsilon,\mathcal{T}_j,L^2(P)\right)
}
\,d\varepsilon
<\infty.
\end{equation*}
Theorem~\ref{thm:bracketing-donsker} therefore shows
that $\mathcal{T}_j$ is $P$-Donsker.
\end{proof}

\subsection{\texorpdfstring{$L^2(P)$}{L2(P)}
continuity at the origin}
\label{subsec:L2-continuity-directional}

We next use the almost sure differentiability at $0$
established in
Lemma~\ref{lem:population-directional-identification}
to prove continuity of $T_x$ at $0$ in $L^2(P)$.

\begin{lemma}
\label{lem:L2-continuity-directional}
As $x\to0$ in $V$,
\begin{equation}
\label{eq:L2-continuity-directional}
\|T_x-T_0\|_{P,2}\longrightarrow0.
\end{equation}
\end{lemma}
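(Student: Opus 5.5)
The plan is to show pointwise convergence $T_x(q)\to T_0(q)$ for $P$-almost every $q$ as $x\to0$, and then conclude by dominated convergence. Since $V$ is a metric space, it suffices to prove $\|T_{x_k}-T_0\|_{P,2}\to0$ along an arbitrary sequence $x_k\to0$ in $V$.

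\textbf{Domination.} By \eqref{eq:directional-field-bound},
\begin{equation*}
|T_{x_k}(q)-T_0(q)|^2\leq 4m\,L(q)^2,
\end{equation*}
and $L\in L^2(P)$ because $L\in L^{2+\eta}(P)$. So dominated convergence applies once we have almost sure pointwise convergence.

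\textbf{Pointwise convergence.} Let $A_0$ be the differentiability set of Lemma~\ref{lem:population-directional-identification}; it satisfies $P(A_0)=1$. Fix $q\in A_0$ and $j$. Choose $s>0$ small enough that $\pm se_j+B_{|x_k|}(0)\subset K$ for all large $k$. The secant bounds \eqref{eq:directional-secant-bounds} at the point $x_k$ give
\begin{equation*}
\frac{\rho_q(x_k-se_j)-\rho_q(x_k)}s-\frac{c(q)s}{2}
\leq T_{x_k,j}(q)\leq
\frac{\rho_q(x_k)-\rho_q(x_k+se_j)}s+\frac{c(q)s}{2}.
\end{equation*}
We let $k\to\infty$ with $s$ fixed, using continuity of $\rho_q$, which is $L(q)$-Lipschitz. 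This yields
\begin{equation*}
\frac{\rho_q(-se_j)-\rho_q(0)}s-\frac{c(q)s}{2}
\leq\liminf_k T_{x_k,j}(q)\leq\limsup_k T_{x_k,j}(q)\leq
\frac{\rho_q(0)-\rho_q(se_j)}s+\frac{c(q)s}{2}.
\end{equation*}
Now let $s\downarrow0$. Since $\rho_q$ is differentiable at $0$, both outer bounds converge to $-\partial_j\rho_q(0)$. By Lemma~\ref{lem:population-directional-identification}, this equals $T_{0,j}(q)$ for $q\in A_0$. Hence $T_{x_k,j}(q)\to T_{0,j}(q)$ for every $j$ and every $q\in A_0$.

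Combining the two steps, dominated convergence gives $\|T_{x_k}-T_0\|_{P,2}\to0$, and hence \eqref{eq:L2-continuity-directional}.

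\textbf{Main obstacle.} The delicate point is that $T_x$ is defined through one-sided derivatives, which need not be continuous in $x$ at nondifferentiability points. The argument avoids this by using only the two-sided sandwich \eqref{eq:directional-secant-bounds}, which comes from semiconcavity, together with differentiability at the single point $0$ for $P$-almost every $q$. We need no common exceptional set over $x\in V$, because only the fixed point $0$ enters the limit.
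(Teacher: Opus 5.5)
Your proposal is correct and follows the paper's proof essentially step for step. You use the same ingredients in the same order: the secant sandwich \eqref{eq:directional-secant-bounds} at fixed $s$, the limit $k\to\infty$ by continuity of $\rho_q$, then $s\downarrow0$ using differentiability of $\rho_q$ at $0$ for $P$-almost every $q$, and finally dominated convergence with the envelope $4mL(q)^2$.
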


\begin{proof}
By Lemma~\ref{lem:population-directional-identification},
$\rho_q$ is differentiable at $0$ for $P$-almost every $q$.
Fix such a $q$, a coordinate $j$, and a sequence
$x_k\in V$ with $x_k\to0$.
For any fixed $0<s<r_0-r$, the bounds
\eqref{eq:directional-secant-bounds} and continuity of
$\rho_q$ imply
\begin{align*}
\frac{\rho_q(-se_j)-\rho_q(0)}s-\frac{c(q)s}{2}
&\leq\liminf_{k\to\infty}T_{x_k,j}(q)\\
&\leq\limsup_{k\to\infty}T_{x_k,j}(q)\\
&\leq\frac{\rho_q(0)-\rho_q(se_j)}s+\frac{c(q)s}{2}.
\end{align*}
Letting $s\downarrow0$, both extreme expressions tend
to $-\partial_j\rho_q(0)=T_{0,j}(q)$.
Thus $T_{x_k,j}(q)\to T_{0,j}(q)$.
Since the sequence and coordinate were arbitrary,
\begin{equation*}
T_x(q)\longrightarrow T_0(q)\qquad\text{as }x\to0
\end{equation*}
for $P$-almost every $q$.
Moreover, \eqref{eq:directional-field-bound} gives
\begin{equation*}
|T_x(q)-T_0(q)|^2\leq4mL(q)^2.
\end{equation*}
Since $L\in L^2(P)$, dominated convergence proves
\eqref{eq:L2-continuity-directional}.
\end{proof}

\subsection{Evaluation at the sample Fr\'echet mean}
\label{subsec:random-index-removal}

\begin{proposition}
\label{prop:random-index-removal}
We have
\begin{equation}
\label{eq:random-index-removal}
\mathbb{G}_nT_{x_n}-\mathbb{G}_nT_0
=o_{\mathbb{P}}(1)
\end{equation}
in $\mathbb{R}^m$.
\end{proposition}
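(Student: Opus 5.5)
The argument works coordinatewise. Since convergence in probability in $\mathbb{R}^m$ is equivalent to convergence of each coordinate, it suffices to fix $j\in\{1,\ldots,m\}$ and show $\mathbb{G}_nT_{x_n,j}-\mathbb{G}_nT_{0,j}=o_{\mathbb{P}}(1)$. Both functions $T_{x_n,j}$ and $T_{0,j}$ belong to $\mathcal{T}_j$, because $x_n\in V$ for every $n$ and $0\in V$. By Lemma~\ref{lem:polynomial-bracketing}, $\mathcal{T}_j$ is $P$-Donsker. Theorem~\ref{thm:bracketing-donsker} then gives the asymptotic equicontinuity statement \eqref{eq:asymptotic-equicontinuity} for $\mathcal{T}_j$ with respect to $d_P$.

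Fix $\varepsilon>0$ and $\gamma>0$. Using \eqref{eq:asymptotic-equicontinuity}, choose $\delta>0$ such that
\begin{equation*}
\limsup_{n\to\infty}\mathbb{P}^{*}\left(\sup_{h_1,h_2\in\mathcal{T}_j,\ d_P(h_1,h_2)\leq\delta}\left|\mathbb{G}_n(h_1-h_2)\right|>\varepsilon\right)<\gamma.
\end{equation*}
By Lemma~\ref{lem:L2-continuity-directional}, there is $\kappa>0$ such that $\|T_{x,j}-T_{0,j}\|_{P,2}\leq\delta$ whenever $x\in V$ and $|x|<\kappa$. Next decompose
\begin{equation*}
\{|\mathbb{G}_n(T_{x_n,j}-T_{0,j})|>\varepsilon\}\subset\{|x_n|\geq\kappa\}\cup\Big\{\sup_{d_P(h_1,h_2)\leq\delta}|\mathbb{G}_n(h_1-h_2)|>\varepsilon\Big\}.
\end{equation*}
On the complement of the first event, the pair $(T_{x_n,j},T_{0,j})$ is admissible in the supremum, so the inclusion holds. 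Since $x_n\xrightarrow{\mathbb{P}}0$, the first event has probability tending to zero. It follows that $\limsup_n\mathbb{P}^{*}(|\mathbb{G}_n(T_{x_n,j}-T_{0,j})|>\varepsilon)\leq\gamma$, and since $\gamma$ is arbitrary this proves the claim.

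The main obstacle is measurability: the supremum may be nonmeasurable, and the inclusion above only yields outer-probability bounds. I would settle this by noting that $\mathbb{G}_nT_{x_n,j}$ is itself a genuine random variable. Indeed, $(x,q)\mapsto T_{x,j}(q)$ is jointly $\mathcal{B}(V)\otimes\mathcal{B}(Q)$-measurable by Lemma~\ref{lem:directional-field}, and $x_n$ is measurable. The composition $\omega\mapsto T_{x_n(\omega),j}(X_i(\omega))$ is therefore measurable. Because $L\in L^1(P)$, the quantity $PT_{x,j}=-\partial_jf(x)$ from \eqref{eq:mean-directional-field} is a continuous function of $x$, so its evaluation at $x_n$ is also measurable. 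Thus the event on the left is measurable, its probability is bounded by the outer probability of the union, and subadditivity of $\mathbb{P}^{*}$ completes the argument.
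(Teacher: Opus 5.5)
Your proposal is correct and follows essentially the same route as the paper: a coordinatewise reduction, asymptotic equicontinuity of the Donsker class $\mathcal{T}_j$ from Lemma~\ref{lem:polynomial-bracketing}, and the $L^2(P)$-continuity of Lemma~\ref{lem:L2-continuity-directional} combined with $x_n\xrightarrow{\mathbb{P}}0$. The only differences are cosmetic: you split on $\{|x_n|\geq\kappa\}$ rather than on $\{d_P(T_{x_n,j},T_{0,j})>\delta\}$, which avoids arguing measurability of $x\mapsto\|T_x-T_0\|_{P,2}$, and continuity of $x\mapsto PT_{x,j}=-\partial_jf(x)$ comes from \textup{(\ref{ass:population-regularity})}, not from $L\in L^1(P)$, which only gives integrability (only measurability of this map is needed, and that holds either way).
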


\begin{proof}
By Lemma~\ref{lem:L2-continuity-directional}
and $x_n\xrightarrow{\mathbb{P}}0$,
\begin{equation*}
\|T_{x_n}-T_0\|_{P,2}
\xrightarrow{\mathbb{P}}0.
\end{equation*}
Here $\|T_{x_n}-T_0\|_{P,2}$ denotes the function
$x\mapsto\|T_x-T_0\|_{P,2}$ evaluated at $x_n$.
Joint measurability of $(x,q)\mapsto T_x(q)$
ensures that this function is Borel measurable
and that $\mathbb{G}_nT_{x_n}$ is a measurable
$\mathbb{R}^m$-valued random variable.

Fix $j\in\{1,\ldots,m\}$.
Since
\begin{equation*}
d_P(T_{x_n,j},T_{0,j})
\leq
\|T_{x_n}-T_0\|_{P,2},
\end{equation*}
we obtain
\begin{equation}
\label{eq:coordinate-random-index-L2}
d_P(T_{x_n,j},T_{0,j})
\xrightarrow{\mathbb{P}}0.
\end{equation}

By Lemma~\ref{lem:polynomial-bracketing},
$\mathcal{T}_j$ is $P$-Donsker.
For every $\varepsilon>0$ and $\delta>0$,
\begin{equation*}
\mathbb{P}\left(\left|\mathbb{G}_n(T_{x_n,j}-T_{0,j})\right|>\varepsilon\right)
\leq
\mathbb{P}\left(
d_P(T_{x_n,j},T_{0,j})>\delta
\right)
+
\mathbb{P}^{*}\left(
\sup_{\substack{
h_1,h_2\in\mathcal{T}_j\\
d_P(h_1,h_2)\leq\delta
}}
\left|\mathbb{G}_n(h_1-h_2)\right|
>\varepsilon
\right).
\end{equation*}
For each fixed $\delta>0$, the first term tends to $0$
by \eqref{eq:coordinate-random-index-L2}.
By \eqref{eq:asymptotic-equicontinuity},
the limit superior of the second term tends to $0$
as $\delta\downarrow0$.
Therefore
\begin{equation*}
\mathbb{G}_n(T_{x_n,j}-T_{0,j})
=o_{\mathbb{P}}(1).
\end{equation*}
Since $m<\infty$, combining the coordinatewise
convergence gives
\begin{equation*}
\mathbb{G}_n(T_{x_n}-T_0)
=o_{\mathbb{P}}(1)
\qquad\text{in }\mathbb{R}^m.
\end{equation*}
This proves \eqref{eq:random-index-removal}
and hence \eqref{eq:random-index-removal-target}.
\end{proof}

\begin{proof}[Proof of Theorem~\ref{thm:main-gradient-clt}]
Lemma~\ref{lem:population-directional-identification}
shows that $\rho_q$ is differentiable at $0$
for $P$-almost every $q\in Q$.
After defining $\nabla\rho_q(0)=0$ on the exceptional
set, the map $q\mapsto\nabla\rho_q(0)$ is Borel
measurable and satisfies
\begin{equation*}
\mathbb{E}\left[\nabla\rho_X(0)\right]=0,
\qquad
\mathbb{E}\left[|\nabla\rho_X(0)|^2\right]<\infty.
\end{equation*}
Moreover,
\begin{equation*}
T_0(q)=-\nabla\rho_q(0)
\qquad\text{for }P\text{-almost every }q\in Q,
\qquad
PT_0=0.
\end{equation*}

By \eqref{eq:empirical-process-gradient}
and Proposition~\ref{prop:random-index-removal},
\begin{equation*}
\sqrt{n}\nabla f(x_n)
=
\mathbb{G}_nT_{x_n}+o_{\mathbb{P}}(1)
=
\mathbb{G}_nT_0+o_{\mathbb{P}}(1).
\end{equation*}
Consequently,
\begin{equation*}
\sqrt{n}\nabla f(x_n)
=
-\frac1{\sqrt{n}}
\sum_{i=1}^n\nabla\rho_{X_i}(0)
+
o_{\mathbb{P}}(1),
\end{equation*}
which is \eqref{eq:main-asymptotic-expansion}.

The central limit theorem gives
\begin{equation*}
-\frac1{\sqrt{n}}
\sum_{i=1}^n\nabla\rho_{X_i}(0)
\xrightarrow{d}
\mathcal{N}_{\mathbb{R}^m}(0,\Sigma),
\qquad
\Sigma
=
\operatorname{Cov}\left(\nabla\rho_X(0)\right).
\end{equation*}
The conclusion \eqref{eq:main-gradient-clt}
now follows from Slutsky's theorem.
\end{proof}

\section{Proof of Theorem~\ref{thm:gradient-scaling-limit}}
\label{sec:scaling-limits}

\begin{proof}[Proof of Theorem~\ref{thm:gradient-scaling-limit}]
For $y\in A_nB_{r_1}(0)$, set
\begin{equation*}
\Lambda_n(y)\coloneqq\sqrt{n}\nabla f\left(A_n^{-1}y\right).
\end{equation*}
By \eqref{eq:gradient-scaling-limit},
$\Lambda_n\to\Lambda$ locally uniformly on $\mathbb{R}^m$.

\medskip
\noindent\textbf{Step 1. Convergence of $A_{n+1}A_n^{-1}$.}
We prove that
\begin{equation}\label{eq:Cn_limit}
C_n\coloneqq A_{n+1}A_n^{-1}\longrightarrow I
\end{equation}
in operator norm, where $I$ is the identity map.
For $y\in A_nB_{r_1}(0)$,
\begin{equation}\label{eq:successive-scaling-identity}
\Lambda_{n+1}(C_ny)
=\sqrt{\frac{n+1}{n}}\,\Lambda_n(y).
\end{equation}
If $\{C_n\}$ were unbounded, we could pass to a subsequence
with $\|C_n\|_{\mathrm{op}}\to\infty$ and choose unit vectors
$v_n$ such that $|C_n^{-1}v_n|=\|C_n\|_{\mathrm{op}}^{-1}$.
Passing to a further subsequence, suppose that $v_n\to v$
with $|v|=1$.
Then $C_n^{-1}v_n\to0$, so local uniform convergence and
\eqref{eq:successive-scaling-identity}, applied with
$y=C_n^{-1}v_n$, give $\Lambda(v)=\Lambda(0)$.
This contradicts injectivity of $\Lambda$.
Thus $\{C_n\}$ is bounded.
For any subsequence along which $C_n\to C$, the same identity
and local uniform convergence give
$\Lambda(Cy)=\Lambda(y)$ for every $y\in\mathbb{R}^m$.
Injectivity implies $C=I$, proving \eqref{eq:Cn_limit}.

\medskip
\noindent\textbf{Step 2. Boundedness of rescaled sequences.}
We prove that, if $n_j\to\infty$ and $u_j\in B_{r_1}(0)$
satisfy
\begin{equation*}
u_j\longrightarrow0,
\qquad
\sup_j\left|\sqrt{n_j}\nabla f(u_j)\right|<\infty,
\end{equation*}
then $\{A_{n_j}u_j\}$ is bounded.
Suppose otherwise and pass to a subsequence such that
$|A_{n_j}u_j|\to\infty$.
Choose $M>0$ with
$|\sqrt{n_j}\nabla f(u_j)|\leq M$ for every $j$.
Since $\Lambda^{-1}$ is continuous, the set
$\Lambda^{-1}\left(\overline{B_{M+1}(0)}\right)$ is compact.
Choose $R>0$ such that
\begin{equation*}
\Lambda^{-1}\left(\overline{B_{M+1}(0)}\right)
\subset B_R(0).
\end{equation*}
Then
\begin{equation}\label{eq:scaling-limit-properness}
|w|\geq R\quad\Longrightarrow\quad|\Lambda(w)|>M+1.
\end{equation}
For all sufficiently large $j$, define
\begin{equation*}
k_j\coloneqq
\min\left\{1\leq k\leq n_j\mid|A_ku_j|\geq R\right\},
\qquad
w_j\coloneqq A_{k_j}u_j.
\end{equation*}
The minimum exists because $|A_{n_j}u_j|\to\infty$.
Moreover, $k_j\to\infty$, since $u_j\to0$ and each $A_k$
is fixed.
By minimality of $k_j$ and \eqref{eq:Cn_limit},
\begin{equation*}
R\leq|w_j|
\leq\|C_{k_j-1}\|_{\mathrm{op}}R
=R+o(1).
\end{equation*}
After passing to a further subsequence, suppose that
$w_j\to w$ with $|w|=R$.
Local uniform convergence gives
$\Lambda_{k_j}(w_j)\to\Lambda(w)$, whereas
\begin{equation*}
\left|\Lambda_{k_j}(w_j)\right|
=\sqrt{\frac{k_j}{n_j}}
\left|\sqrt{n_j}\nabla f(u_j)\right|
\leq M.
\end{equation*}
Thus $|\Lambda(w)|\leq M$, contradicting
\eqref{eq:scaling-limit-properness}.

\medskip
\noindent\textbf{Step 3. Tightness.}
We prove that $\{A_nx_n\}$ is tight.
Theorem~\ref{thm:main-gradient-clt} gives
\begin{equation}\label{eq:gradient-limit-used-for-scaling}
\sqrt{n}\nabla f(x_n)\xrightarrow{d}\zeta,
\end{equation}
so $\{\sqrt{n}\nabla f(x_n)\}$ is tight.
If $\{A_nx_n\}$ were not tight, there would exist
$\varepsilon>0$, an increasing sequence $n_j$, and
$R_j\to\infty$ such that
\begin{equation*}
\mathbb{P}\left(|A_{n_j}x_{n_j}|>R_j\right)>\varepsilon.
\end{equation*}
Since $x_n\xrightarrow{\mathbb{P}}0$, choose deterministic
$\delta_j\downarrow0$ with
$\mathbb{P}(|x_{n_j}|>\delta_j)\to0$.
Tightness of $\{\sqrt{n}\nabla f(x_n)\}$ allows us to
choose $M>0$ such that, for all sufficiently large $j$,
\begin{equation*}
\mathbb{P}\left(
\left|\sqrt{n_j}\nabla f(x_{n_j})\right|>M
\right)<\frac{\varepsilon}{4}.
\end{equation*}
Consequently, the event
\begin{equation*}
\left\{
|A_{n_j}x_{n_j}|>R_j,\quad
|x_{n_j}|\leq\delta_j,\quad
\left|\sqrt{n_j}\nabla f(x_{n_j})\right|\leq M
\right\}
\end{equation*}
has positive probability for all sufficiently large $j$.
Choose $\omega_j$ in this event and set
$u_j=x_{n_j}(\omega_j)\in B_r(0)$.
Then $u_j\to0$, $|\sqrt{n_j}\nabla f(u_j)|\leq M$, and
$|A_{n_j}u_j|>R_j\to\infty$, contradicting Step~2.

\medskip
\noindent\textbf{Step 4. The limit.}
We prove \eqref{eq:scaled-frechet-mean-limit}.
By tightness from Step~3 and local uniform convergence,
\begin{equation*}
\Lambda_n(A_nx_n)-\Lambda(A_nx_n)
\xrightarrow{\mathbb{P}}0.
\end{equation*}
Since $x_n\in B_r(0)\subset B_{r_1}(0)$,
$\Lambda_n(A_nx_n)=\sqrt{n}\nabla f(x_n)$.
Together with \eqref{eq:gradient-limit-used-for-scaling},
Slutsky's theorem yields $\Lambda(A_nx_n)\xrightarrow{d}\zeta$.
The continuous mapping theorem, applied to $\Lambda^{-1}$,
now gives $A_nx_n\xrightarrow{d}\Lambda^{-1}(\zeta)$.
\end{proof}

\section{Riemannian manifolds}
\label{sec:riemannian-manifolds}

In this section, let $(M,g)$ be a complete connected
$m$-dimensional smooth Riemannian manifold without boundary,
and let $d$ be its geodesic distance.
We retain the notation of Section~\ref{sec:setting-main-result}.
Throughout this section, assume that
$\mathbb{E}\left[d(o,X)^2\right]<\infty$ for some $o\in M$
and that $F$ has a unique minimizer $\mu$.
We verify the assumptions of Theorem~\ref{thm:main-gradient-clt}
in this setting and identify its limiting covariance in
terms of the logarithm map.
We then give a sufficient condition for the local regularity
of $F$.

By the Hopf--Rinow theorem, $(M,d)$ is proper.
Consequently, $F$ and every $F_n$ have nonempty compact
minimizer sets, and measurable sample Fr\'echet means exist,
as explained in Section~\ref{sec:setting-main-result}.
Moreover, every measurable sequence of sample
Fr\'echet means satisfies
\begin{equation*}
d(\mu_n,\mu)\longrightarrow0
\qquad\text{almost surely};
\end{equation*}
see \cite{BhattacharyaPatrangenaru2003}.
Thus \textup{(\ref{ass:local-coordinates-consistency})}
holds.

Fix an orthonormal identification $T_\mu M\simeq\mathbb{R}^m$,
choose $0<r_0<\operatorname{inj}(\mu)$, and take
\begin{equation*}
\phi=\exp_\mu^{-1}\colon B_{r_0}^M(\mu)
\longrightarrow B_{r_0}(0).
\end{equation*}
Here $B_s^M(p)$ denotes the geodesic ball of radius $s$
centered at $p$, whereas $B_s(x)$ denotes a ball in
$T_\mu M\simeq\mathbb{R}^m$.
This verifies \textup{(\ref{ass:locally-Euclidean})}.
In these coordinates,
\begin{equation*}
\rho_q(x)=\frac12d(\exp_\mu x,q)^2,
\qquad
f(x)=F(\exp_\mu x),
\qquad x\in B_{r_0}(0).
\end{equation*}
The gradient $\nabla f$ is taken with respect to the
fixed inner product $g_\mu$ on $T_\mu M$.
For the standard results from Riemannian geometry used below, see \cite{Lee2018}.

\subsection{Semiconcavity and local Lipschitz continuity}
\label{subsec:uniform-semiconcavity}

Set
\begin{equation*}
b(q)\coloneqq1+d(\mu,q),\qquad q\in M.
\end{equation*}
By geodesic semiconcavity of squared distance functions
(see \cite[Lemma~3.1]{LytchakHuckemann2025}),
a finite covering of the compact ball
$\overline{B_{r_0}^M(\mu)}$, together with the triangle
inequality, yields a constant $C$ independent of $q$ such that
\begin{equation*}
t\longmapsto\frac12d(\gamma(t),q)^2-\frac{Cb(q)}2t^2
\end{equation*}
is concave for every unit-speed geodesic $\gamma$
contained in a neighborhood of this compact ball.
The following lemma obtains the corresponding estimates
in normal coordinates by bounding the first and second
derivatives of the exponential map.
These estimates verify
\textup{(\ref{ass:semiconcavity-moments})}
under a $(2+\eta)$-moment condition.

\begin{lemma}
\label{lem:uniform-semiconcavity}
There exists a constant $C_0>0$ such that, for every
$q\in M$, the function
\begin{equation*}
x\longmapsto\rho_q(x)-\frac{C_0b(q)}2|x|^2
\end{equation*}
is concave on $B_{r_0}(0)$, and
\begin{equation}
\left|\rho_q(x)-\rho_q(y)\right|
\leq C_0b(q)|x-y|,
\qquad x,y\in B_{r_0}(0).
\label{eq:uniform-loss-lipschitz}
\end{equation}
\end{lemma}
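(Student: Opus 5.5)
We prove the two estimates separately, both by passing between the geodesic statement for $d(\cdot,q)^2/2$ and the coordinate function $\rho_q=\frac12 d(\exp_\mu\cdot,q)^2$. Since $r_0<\operatorname{inj}(\mu)$ is fixed and $\overline{B_{r_0}^M(\mu)}$ is compact, $\exp_\mu$ is a diffeomorphism from a neighborhood of $\overline{B_{r_0}(0)}$ onto a neighborhood of the closed geodesic ball. Its first and second derivatives are therefore bounded on $\overline{B_{r_0}(0)}$, and its differential has uniformly bounded inverse there. This compactness bookkeeping provides all constants independent of $q$; it will be set up first.

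\textbf{Lipschitz bound.} For $x,y\in B_{r_0}(0)$, the straight segment between them lies in the ball, and its image under $\exp_\mu$ has length at most $\Lambda_1|x-y|$, where $\Lambda_1=\sup\|d\exp_\mu\|$. Hence
\[
d(\exp_\mu x,\exp_\mu y)\le \Lambda_1|x-y|.
\]
Then
\[
|\rho_q(x)-\rho_q(y)|=\tfrac12\,|d_x-d_y|\,(d_x+d_y)\le \tfrac12\,d(\exp_\mu x,\exp_\mu y)\,(d_x+d_y),
\]
where $d_x=d(\exp_\mu x,q)$. Since $d_x\le r_0+d(\mu,q)\le \max(1,r_0)\,b(q)$, this gives \eqref{eq:uniform-loss-lipschitz} with a constant of the form $\Lambda_1\max(1,r_0)$.

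\textbf{Semiconcavity.} Concavity of a continuous function on a convex open set is equivalent to concavity along every line segment. So we fix $x\in B_{r_0}(0)$ and a unit vector $v$, and show that
\[
g(t)=\rho_q(x+tv)-\tfrac{C_0b(q)}2t^2
\]
is concave. For this it suffices to bound the upper second difference: $\limsup_{h\to0}\bigl(g(t+h)+g(t-h)-2g(t)\bigr)/h^2\le0$ for every $t$, which implies concavity of a continuous function.

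Set $\sigma(t)=\exp_\mu(x+tv)$, a smooth curve with $|\sigma'|\le\Lambda_1$ and $|\nabla_t\sigma'|\le\Lambda_2$ uniformly. The geodesic semiconcavity quoted before the lemma gives, at $p=\sigma(t)$, an upper support-type bound. In the barrier form, near any $p$ there is a smooth function $\tilde\rho\ge \frac12 d(\cdot,q)^2$, with equality at $p$ and $\operatorname{Hess}\tilde\rho\le Cb(q)$. Such a barrier can be built from a minimizing geodesic from $q$ to $p$: take the squared distance to a point slightly along that geodesic, plus a constant, or use a standard support argument in the semiconcave setting.

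Composing the barrier with $\sigma$ gives
\[
(\tilde\rho\circ\sigma)''=\operatorname{Hess}\tilde\rho(\sigma',\sigma')+\langle\nabla\tilde\rho,\nabla_t\sigma'\rangle\le Cb(q)\Lambda_1^2+|\nabla\tilde\rho|\,\Lambda_2 .
\]
Here $|\nabla\tilde\rho|$ at $p$ is at most about $d(p,q)\le \max(1,r_0)\,b(q)$. Because $\rho_q(x+\cdot v)\le\tilde\rho\circ\sigma$, with equality at $t$, the upper second difference of $\rho_q(x+\cdot v)$ at $t$ is at most this bound, which has the form $C'b(q)$. Taking $C_0\ge C'$, and enlarging it to also cover the Lipschitz constant, finishes the proof.

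\textbf{Main obstacle.} The delicate point is justifying the barrier, or equivalently transferring geodesic semiconcavity to the non-geodesic curves $\sigma$. The cheapest route avoids barriers. Semiconcavity along geodesics with constant $Cb(q)$ is equivalent to a first-order condition: the inequality $f(\gamma(s))\le f(p)+s\,\langle w,\gamma'(0)\rangle+\frac{Cb}2 s^2$, for some supergradient $w$ at $p$ with $|w|\le d(p,q)$. This condition holds for the geodesic $\gamma$ with $\gamma'(0)=\sigma'(t)$, and $d(\sigma(t+h),\gamma(h))=O(\Lambda_2h^2)$ uniformly. The Lipschitz bound (constant about $b(q)$) then transfers the second-order inequality from $\gamma$ to $\sigma$, at the cost of an extra term of order $b(q)\Lambda_2h^2$. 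This is exactly the same form as the barrier estimate, so I would write the proof this way.
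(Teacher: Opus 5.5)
Your proof is correct and, in the form you say you would write it, it is essentially the paper's argument. The Lipschitz bound comes from the triangle inequality and the Lipschitz property of $\exp_\mu$. Semiconcavity comes from comparing $\exp_\mu(x+(t\pm h)v)$ with the geodesic having the same initial point and velocity, using the $O(h^2)$ deviation and the $b(q)$-Lipschitz bound to transfer the geodesic second-difference estimate; your $\limsup$ criterion plays the role of the paper's local midpoint concavity plus continuity.

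The barrier detour is unnecessary, and as phrased it would not give a bound uniform in $q$. With the support point at distance $a$ from $p$, the Hessian at $p$ is of order $d(p,q)/a$, so $a$ must stay bounded below rather than be taken ``slightly along'' the geodesic. The barrier must also be $\tfrac12\bigl(d(\cdot,z)+d(z,q)\bigr)^2$ rather than a squared distance plus a constant.
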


\begin{proof}
Write $E=\exp_\mu$.
Constants denoted by $C$ below are independent of $x$
and $q$ and may increase from line to line.
For $z,w\in B_{r_0+1}^M(\mu)$, the triangle inequality gives
\begin{equation*}
\left|\frac12d(z,q)^2-\frac12d(w,q)^2\right|
\leq Cb(q)d(z,w).
\end{equation*}
Since $E$ is Lipschitz on $B_{r_0}(0)$, this also gives
\eqref{eq:uniform-loss-lipschitz} for $C_0$ sufficiently large.

Fix $x\in B_{r_0}(0)$ and a unit vector $v\in T_\mu M$, and set
\begin{equation*}
\gamma(t)=\exp_{E(x)}(t\,dE_xv).
\end{equation*}
The curves $t\mapsto E(x+tv)$ and $\gamma$ have the same
position and velocity at $t=0$.
Taylor's theorem in normal coordinates at $E(x)$ and
boundedness of the first and second derivatives of $E$
on $\overline{B_{r_0}(0)}$ give
\begin{equation*}
d(E(x\pm tv),\gamma(\pm t))\leq Ct^2,
\qquad |\dot\gamma|=|dE_xv|\leq C
\end{equation*}
for sufficiently small $|t|$, locally uniformly in $x$
and uniformly in unit vectors $v$.
Using the preceding Lipschitz bound and the geodesic
concavity estimate stated before the lemma, we obtain
\begin{align*}
\rho_q(x+tv)+\rho_q(x-tv)-2\rho_q(x)
&\leq
\frac12d(\gamma(t),q)^2+\frac12d(\gamma(-t),q)^2
-d(E(x),q)^2+Cb(q)t^2\\
&\leq Cb(q)t^2.
\end{align*}
Thus, after increasing $C_0$ if necessary,
$\rho_q-C_0b(q)|\cdot|^2/2$ is locally midpoint concave
on every Euclidean line segment in $B_{r_0}(0)$.
By continuity, its restriction to each such segment
is concave.
This proves the first assertion.
\end{proof}

In particular, suppose that, for some $\eta>0$,
\begin{equation}
\mathbb{E}\left[d(o,X)^{2+\eta}\right]<\infty.
\label{eq:higher-moment-assumption}
\end{equation}
Then $b\in L^{2+\eta}(P)$, and
\textup{(\ref{ass:semiconcavity-moments})} holds with
\begin{equation*}
c(q)=C_0b(q),\qquad L(q)=C_0b(q).
\end{equation*}
Thus, under uniqueness of the population mean and
\eqref{eq:higher-moment-assumption}, assumptions
\textup{(\ref{ass:local-coordinates-consistency})}--%
\textup{(\ref{ass:semiconcavity-moments})} are satisfied.

\subsection{The central limit theorem}
\label{subsec:riemannian-gradient-clt}

Whenever there is a unique minimizing geodesic
$\gamma\colon[0,1]\to M$ from $\mu$ to $q$, parametrized
with constant speed, write $\log_\mu(q)=\dot\gamma(0)$.
This agrees with the usual logarithm map on
$M\setminus\operatorname{Cut}(\mu)$.
At points where the minimizing geodesic is not unique,
set $\log_\mu(q)=0$.
The next theorem recovers almost sure uniqueness of the
minimizing geodesic from our coordinate argument.
The stronger conclusion $P(\operatorname{Cut}(\mu))=0$
already follows from
\cite[Theorem~1.4]{LytchakHuckemann2025} under a finite
second moment. Hence the logarithm used below agrees almost
surely with the usual logarithm off the cut locus.

\begin{theorem}
\label{thm:riemannian-gradient-clt}
Assume that $\mathbb{E}\left[ d(o, X)^{2+\eta}\right]<\infty$ for some $\eta>0$
and suppose that $F\in C^{1,\alpha}(U)$ for some $\alpha\in(0,1]$
and some open neighborhood $U$ of $\mu$.
Choose $r_1\in(0,r_0)$ such that
$\overline{B_{r_1}^M(\mu)}\subset U$.
For any measurable sequence of sample Fr\'echet means, set
\begin{equation*}
x_n\coloneqq
\begin{cases}
\log_\mu(\mu_n),&\mu_n\in B_{r_1}^M(\mu),\\
0,&\mu_n\notin B_{r_1}^M(\mu).
\end{cases}
\end{equation*}
Then the minimizing geodesic from $\mu$ to $X$ is
almost surely unique, and
\begin{equation*}
\mathbb{E}\left[\log_\mu(X)\right]=0,
\qquad
\mathbb{E}\left[|\log_\mu(X)|^2\right]<\infty.
\end{equation*}
Moreover,
\begin{equation}
\sqrt{n}\nabla f(x_n)
=\frac1{\sqrt{n}}\sum_{i=1}^n\log_\mu(X_i)
+o_{\mathbb{P}}(1)
\label{eq:riemannian-asymptotic-expansion}
\end{equation}
in $T_\mu M$.
Consequently,
\begin{equation*}
\sqrt{n}\nabla f(x_n)
\xrightarrow{d}\mathcal{N}_{T_\mu M}(0,\Sigma),
\qquad
\Sigma=\operatorname{Cov}\left(\log_\mu(X)\right).
\end{equation*}
\end{theorem}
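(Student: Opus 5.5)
The plan is to apply Theorem~\ref{thm:main-gradient-clt} in normal coordinates $\phi=\exp_\mu^{-1}$ and then identify $\nabla\rho_q(0)$ with $-\log_\mu(q)$.

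\textbf{Step 1: the assumptions.} Assumptions (\ref{ass:local-coordinates-consistency})--(\ref{ass:semiconcavity-moments}) were verified above: consistency via properness, the chart via $\exp_\mu$, and Lemma~\ref{lem:uniform-semiconcavity} with $c=L=C_0b\in L^{2+\eta}(P)$. For (\ref{ass:population-regularity}), $f=F\circ\exp_\mu$ is a composition of a $C^{1,\alpha}$ function with a smooth map. Hence $f\in C^{1,\alpha}$ on $\exp_\mu^{-1}(U)\cap B_{r_0}(0)\supset \overline{B_{r_1}(0)}$, because $\exp_\mu$ maps $\overline{B_{r_1}(0)}$ isometrically in radius onto $\overline{B^M_{r_1}(\mu)}\subset U$. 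Shrinking $r_1$ slightly if needed gives the open ball. I take the fixed radius $r$ of Section~\ref{sec:setting-main-result} equal to the present $r_1$, and $r_1$ there slightly larger (still inside $U$ and below $r_0$). Then $x_n$ as defined here coincides with the coordinate $x_n$ of Section~\ref{sec:setting-main-result}, since $\log_\mu=\exp_\mu^{-1}$ on $B^M_{r_1}(\mu)$ within the injectivity radius. If the paper intends $r=r_1$ literally, a trivial re-choice of radii handles it.

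\textbf{Step 2: identifying the gradient.} Theorem~\ref{thm:main-gradient-clt} shows that $\rho_q$ is differentiable at $0$ for $P$-a.e.\ $q$. I will show that differentiability of $\rho_q$ at $0$ implies uniqueness of the minimizing geodesic from $\mu$ to $q$, and that $\nabla\rho_q(0)=-\log_\mu(q)$; here $d(\exp_\mu\cdot)$ at $0$ is the identity, so the Euclidean and $g_\mu$ gradients agree. Let $\gamma$ be any minimizing geodesic from $\mu$ to $q$ with $\dot\gamma(0)=w$, $|w|=d(\mu,q)$. The first variation upper bound
\[
\rho_q(tv)\le \tfrac12 d(\exp_\mu(tv),\gamma(1))^2\le \rho_q(0)-t\langle v,w\rangle+O(t^2)
\]
holds, since $\tfrac12 d(\cdot,\gamma(s))^2$ is smooth near $\mu$ for small $s$ and one can use the triangle inequality through $\gamma(s)$. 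It follows that $D\rho_q(0;v)\le-\langle v,w\rangle$ for every minimizing $w$. If $\rho_q$ is differentiable with gradient $g$, then $\langle g,v\rangle\le-\langle w,v\rangle$ for all $v$, which forces $g=-w$. Two distinct minimizing directions would then give a contradiction, so the geodesic is unique and $\nabla\rho_q(0)=-\log_\mu(q)$. This is the step I expect to need the most care, specifically in justifying the $O(t^2)$ upper bound uniformly via the triangle inequality at an interior point $\gamma(s)$, $s$ small. Only the superdifferential inclusion is needed, not the full cut-locus theory.

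\textbf{Step 3: the moment properties.} Because $\log_\mu(q)=-\nabla\rho_q(0)$ $P$-a.s., Theorem~\ref{thm:main-gradient-clt} gives $\mathbb{E}[\log_\mu X]=0$. The bound $|\log_\mu X|=d(\mu,X)$ gives a finite second moment.

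\textbf{Step 4: the expansion.} Substituting the identification of Step~2 into (\ref{eq:main-asymptotic-expansion}) yields (\ref{eq:riemannian-asymptotic-expansion}). The Gaussian limit with $\Sigma=\operatorname{Cov}(\log_\mu X)$ follows as in the theorem.

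Finally, measurability of $q\mapsto\log_\mu(q)$ with the stated convention is not needed separately. It agrees $P$-a.s.\ with the Borel map $-\nabla\rho_\cdot(0)$, which suffices for the expectations and for the sums, and the sums agree almost surely.
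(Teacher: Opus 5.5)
Your proposal is correct and follows the paper's proof. You verify (A1)--(A3) from the preceding discussion, obtain (A4) on a ball of radius $R\in(r_1,r_0)$ with $\overline{B_R^M(\mu)}\subset U$ so that Theorem~\ref{thm:main-gradient-clt} applies with $r=r_1$, and then identify $\nabla\rho_q(0)=-\log_\mu(q)$ for $P$-almost every $q$; the only difference is that your Step~2 proves directly that differentiability at $\mu$ forces a unique minimizing geodesic with gradient $-\log_\mu(q)$, using the smooth upper support from the triangle inequality through $\gamma(s)$, whereas the paper cites this from Figalli--Villani together with the first variation formula.
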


\begin{proof}
The preceding discussion verifies
\textup{(\ref{ass:local-coordinates-consistency})}--%
\textup{(\ref{ass:semiconcavity-moments})}.
Since $\overline{B_{r_1}^M(\mu)}\subset U$, choose
$R\in(r_1,r_0)$ such that
$\overline{B_R^M(\mu)}\subset U$.
The smoothness of $\exp_\mu$ and the assumed regularity
of $F$ give $f\in C^{1,\alpha}(B_R(0))$.
Thus \textup{(\ref{ass:population-regularity})} holds
on $B_R(0)$, and
Theorem~\ref{thm:main-gradient-clt} applies with
$r=r_1$ in the definition of the localized sample means.

Theorem~\ref{thm:main-gradient-clt} gives differentiability
of $\rho_q$ at $0$ for $P$-almost every $q$.
Equivalently, $p\mapsto d(p,q)^2/2$ is differentiable at
$\mu$, which implies that the minimizing geodesic from
$\mu$ to $q$ is unique; see
\cite[Subsection~1.9]{FigalliVillani2011}.
The first variation formula and
$d(\exp_\mu)_0=\operatorname{id}_{T_\mu M}$ then give
\begin{equation*}
\nabla\rho_q(0)=-\log_\mu(q)
\qquad\text{for }P\text{-almost every }q.
\end{equation*}
The expectation and moment identities,
\eqref{eq:riemannian-asymptotic-expansion}, and the
stated Gaussian limit now follow from
Theorem~\ref{thm:main-gradient-clt}.
\end{proof}

\subsection{Regularity from probability bounds near cut loci}
\label{subsec:frechet-c1alpha}
The regularity of the Fr\'echet function $F$ depends on both
the distribution $P$ and the geometry of $M$, and the
$C^{1,\alpha}$ regularity assumed in
Theorem~\ref{thm:riemannian-gradient-clt}
may be difficult to verify directly.
We therefore give a sufficient condition for \textup{(\ref{ass:population-regularity})} expressed more
explicitly in terms of the distribution and the geometry,
through probability bounds for neighborhoods of cut loci.
For $q\in M$, let $\operatorname{Cut}(q)$ denote its
cut locus, with $\dist(p,\varnothing)=+\infty$.

\begin{proposition}
\label{prop:frechet-c1alpha-cut-neighborhood}
Assume $\mathbb{E}\left[d(o,X)^{2+\eta}\right]<\infty$ for some $\eta>0$ and set
\begin{equation*}
\alpha_0\coloneqq\frac{1+\eta}{2+\eta}.
\end{equation*}
Suppose that there are an open neighborhood $U$ of
$\mu$ and constants $C,t_0>0$ such that
\begin{equation}
\sup_{p\in U}
\mathbb{P}\left(\dist\left(p,\operatorname{Cut}(X)\right)
\leq t\right)\leq Ct,
\qquad 0<t<t_0.
\label{eq:c1alpha-cut-probability}
\end{equation}
Then, for some $R\in(0,r_0)$,
\begin{equation*}
f\in C^{1,\alpha_0}(B_R(0)),
\qquad
F\in C^{1,\alpha_0}(B_R^M(\mu)).
\end{equation*}
In particular, the conclusions of
Theorem~\ref{thm:riemannian-gradient-clt} hold.
\end{proposition}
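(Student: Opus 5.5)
We work in the normal coordinates $\rho_q(x)=\frac12 d(\exp_\mu x,q)^2$ and estimate $\nabla\rho_q(x)-\nabla\rho_q(y)$ pointwise, splitting according to whether $q$ is close to the cut locus. Shrink $U$ and choose $R\in(0,r_0)$ with $\exp_\mu(\overline{B_R(0)})\subset U$.

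The starting observation is the symmetry of the cut locus: $p\in\operatorname{Cut}(q)$ iff $q\in\operatorname{Cut}(p)$. Hence, whenever $\dist(p,\operatorname{Cut}(q))>0$, the function $\rho_q$ is smooth near $\phi(p)$ and
\begin{equation*}
\nabla\rho_q(x)=-(d\exp_\mu)_x^{\mathsf T}\log_{\exp_\mu x}(q).
\end{equation*}

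\textbf{Step 1 (Hessian bound).} We first show that for $p=\exp_\mu x$ and $t=\dist(p,\operatorname{Cut}(q))>0$,
\begin{equation*}
\|\nabla^2\rho_q(x)\|\le C\,b(q)\bigl(1+t^{-1}\bigr).
\end{equation*}
The semiconcavity side is already given by Lemma~\ref{lem:uniform-semiconcavity}. The reverse side needs a lower bound on the Hessian of $d(\cdot,q)^2/2$ at points a distance $t$ from the cut locus. We would try to obtain it from a supporting-function argument: near $p$, compare $d(\cdot,q)^2/2$ with $d(\cdot,q')^2/2$ for a point $q'$ on the geodesic from $q$ to $p$, and use Jacobi field comparison along the geodesic. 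A bound of order $b(q)/t$ is plausible because conjugate and cut points are at distance at least $t$ from $p$. This is the step I expect to be the \emph{main obstacle}: making this quantitative with constants that are uniform in $q$ while growing only like $b(q)$. If a clean bound is unavailable, a fallback is to accept any polynomial bound $b(q)^k t^{-1}$ and absorb the extra powers of $b$ using the moment assumption, at the cost of adjusting the exponent.

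\textbf{Step 2 (pointwise difference bound).} Fix $x,y\in B_R(0)$ with $|x-y|=h$ small, and let $S=\{q:\dist(\exp_\mu z,\operatorname{Cut}(q))>Ch \text{ for all } z\in[x,y]\}$. For $q\in S$, integrating the Step 1 Hessian bound along the segment gives
\begin{equation*}
|\nabla\rho_q(x)-\nabla\rho_q(y)|\le C b(q)\,h\int\frac{ds}{\dist}\le Cb(q)h\bigl(1+|\log h|\bigr)+Cb(q)h\,\tau(q)^{-1},
\end{equation*}
where $\tau(q)$ is the distance from the segment to $\operatorname{Cut}(q)$, and it is understood that $\tau(q)\ge Ch$. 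For $q\notin S$, we use the crude bound $|\nabla\rho_q|\le C_0b(q)$, valid by the Lipschitz estimate, through the one-sided derivatives $T_x$. Here $\nabla f(x)=-PT_x$ by Lemma~\ref{lem:population-directional-identification}, so differentiability of each $\rho_q$ is not needed, and the difference is at most $2C_0b(q)$.

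\textbf{Step 3 (integrating against the tail bound).} Using the cut-probability hypothesis at the endpoint (distances along the segment differ by at most $Ch$), we get $\mathbb{P}(\tau(X)\le t)\le C(t+h)$. Hölder's inequality with exponents $2+\eta$ and $(2+\eta)/(1+\eta)$ bounds the bad region:
\begin{equation*}
\mathbb{E}\bigl[b(X)\mathbf 1_{\tau\le Ch}\bigr]\le \|b\|_{2+\eta}\,(Ch)^{(1+\eta)/(2+\eta)}=O(h^{\alpha_0}).
\end{equation*}
For the good region, we decompose dyadically over $\tau\in[2^kh,2^{k+1}h)$, apply the same Hölder inequality on each shell, and sum. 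This yields $h\sum_k (2^kh)^{-1}(2^kh)^{\alpha_0}\lesssim h^{\alpha_0}$, since $\alpha_0<1$. The logarithmic term is $O(h^{\alpha_0})$ as well.

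\textbf{Conclusion.} Combining Steps 2 and 3 gives $|\nabla f(x)-\nabla f(y)|\le C|x-y|^{\alpha_0}$ on $B_R(0)$. Differentiability of $f$ itself follows by dominated convergence, as in Lemma~\ref{lem:population-directional-identification}, once almost-sure differentiability is known; that holds because $\mathbb{P}(\dist(p,\operatorname{Cut}(X))=0)=0$ by the hypothesis. Transferring through the smooth map $\exp_\mu$ gives $F\in C^{1,\alpha_0}(B_R^M(\mu))$, and Theorem~\ref{thm:riemannian-gradient-clt} applies.
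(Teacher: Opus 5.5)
There is a genuine gap, and it sits in Step~1, which is the only real geometric input of the proposition. Your Steps~2--3 and the conclusion essentially match the paper: split at scale $|x-y|$ by the distance from $\exp_\mu x$ to $\operatorname{Cut}(X)$, use H\"older on the near part, and integrate a $b(q)/\dist$ Hessian bound along the segment on the far part. Your dyadic sum replaces the paper's Tonelli identity, and your $\log h$ term is superfluous.

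Step~1 itself is left unproved, and the route you sketch would not work as described.
\begin{itemize}
\item For $q'$ on the minimizing geodesic between $q$ and $p$, the triangle inequality $d(w,q)\le d(q,q')+d(q',w)$, with equality at $w=p$, gives an \emph{upper} support function for $d(\cdot,q)^2/2$ at $p$. That yields only an upper Hessian bound, which semiconcavity already supplies.
\item Jacobi/Rauch comparison along the whole geodesic from $q$ to $p$ needs curvature control along a segment of length $d(q,p)$. On a noncompact $M$ that segment may leave every compact set, so you get no constant uniform in $q$.
\item Your fallback $b(q)^k t^{-1}$ still presupposes some quantitative lower bound. For $k>1$, the same H\"older argument with $b^k\in L^{(2+\eta)/k}$ only gives the exponent $1-k/(2+\eta)<\alpha_0$, so it would not prove the stated exponent.
\end{itemize}
Your intuition is right that what matters is the absence of cut points within distance $t$ of $p$. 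It has to be used by extending the geodesic \emph{beyond} $p$, where only local geometry enters.

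Concretely, let $\ell=d(q,p)$ and $a=a_0\min\{1,t\}$, with $a_0$ small relative to the injectivity radius on $K=\overline{B^M_{R+1}(\mu)}$.
\begin{enumerate}
\item Extend the minimizing geodesic from $q$ to $p$ past $p$ by length $a$, reaching a point $z$. The extension is still minimizing, since otherwise its cut point would lie within distance $a<t$ of $p$. Hence $d(q,z)=\ell+a$.
\item The reverse triangle inequality gives the lower support $\varphi(w)=\frac12\bigl(\ell+a-d(z,w)\bigr)^2\le\frac12 d(q,w)^2$ near $p$, with equality at $p$.
\item Writing $r_z=d(z,\cdot)$, the Hessian of $\varphi$ at $p$ is $dr_z\otimes dr_z-\ell\operatorname{Hess}r_z$.
\item Since $z,p\in K$ are at distance $a$ and the squared distance is smooth near the diagonal of $K\times K$, you get $\|\operatorname{Hess}r_z(p)\|_{\mathrm{op}}\le C/a$ uniformly.
\item Therefore $\operatorname{Hess}\frac12 d(\cdot,q)^2(p)\ge -C\ell/a$. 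Pulling back by $\exp_\mu$, whose first and second derivatives are bounded, gives $D^2\rho_q(x)\ge -Cb(q)/\min\{1,t\}$, using only geometry inside $K$.
\end{enumerate}
With this bound your Steps~2--3 go through.

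One ordering issue: Lemma~\ref{lem:population-directional-identification} was proved assuming \textup{(\ref{ass:population-regularity})}. You should not invoke $\nabla f(x)=-PT_x$ before differentiability of $f$ is known. Your conclusion proves differentiability directly by dominated convergence, so do that first.
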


\begin{proof}
Choose $R\in(0,r_0)$ such that
$\overline{B_R^M(\mu)}\subset U$, and write $E=\exp_\mu$.
For $x\in B_R(0)$ and $q\in M$, set
\begin{equation*}
\delta_q(x)\coloneqq
1\wedge\dist\left(E(x),\operatorname{Cut}(q)\right).
\end{equation*}
For each fixed $p\in M$, the map
$q\mapsto\dist\left(p,\operatorname{Cut}(q)\right)$
is Borel measurable.
Indeed, it suffices to show that, for every $t\geq0$,
the set
\begin{equation*}
\left\{q\in M\mid
\dist\left(p,\operatorname{Cut}(q)\right)\leq t
\right\}
\end{equation*}
is closed.
Let $q_k\to q$ be a sequence in this set, and choose
$z_k\in\operatorname{Cut}(q_k)$ such that
$d(p,z_k)\leq t+1/k$.
Since $\overline{B_{t+1}^M(p)}$ is compact,
we may pass to a subsequence and assume that $z_k\to z$.
The cut relation is closed, so
$z\in\operatorname{Cut}(q)$, while continuity of the
distance gives $d(p,z)\leq t$.
Thus $\dist\left(p,\operatorname{Cut}(q)\right)\leq t$,
proving the claimed closedness.
Since distance to a nonempty set is $1$-Lipschitz and
$dE$ is bounded on $\overline{B_R(0)}$, there is
$L\geq1$ such that
\begin{equation}
|\delta_q(x)-\delta_q(y)|\leq L|x-y|,
\qquad x,y\in B_R(0),\quad q\in M.
\label{eq:c1alpha-cut-distance-lipschitz}
\end{equation}
This also holds when the cut locus is empty, since
then $\delta_q\equiv1$.
Constants denoted by $C$ below are uniform in $x$ and
$q$ and may increase from line to line.

\medskip
\noindent\textbf{Step 1. Gradient and Hessian estimates for $\rho_q$.}
We show that, whenever $\delta_q(x)>0$,
\begin{equation}
|\nabla\rho_q(x)|\leq Cb(q),
\qquad
\left\|D^2\rho_q(x)\right\|_{\mathrm{op}}
\leq\frac{Cb(q)}{\delta_q(x)}.
\label{eq:c1alpha-loss-derivative-bounds}
\end{equation}

The gradient bound and the upper Hessian bound follow
from Lemma~\ref{lem:uniform-semiconcavity}.
For the lower bound, put $p=E(x)$ and first suppose
that $p\neq q$.
The set $K=\overline{B_{R+1}^M(\mu)}$ is compact.
Choose $a_0\in(0,1/4)$ such that
$4a_0<\inf_{z\in K}\operatorname{inj}(z)$, and set
\begin{equation*}
\ell=d(q,p),\qquad a=a_0\delta_q(x).
\end{equation*}
Extend the unique minimizing unit-speed geodesic from
$q$ to $p$ by length $a$, and denote the endpoint by $z$.
The extended geodesic remains minimizing: otherwise
it would meet $\operatorname{Cut}(q)$ at distance at
most $a<\dist(p,\operatorname{Cut}(q))$ from $p$.
Thus $d(q,z)=\ell+a$ and $d(p,z)=a$.

Writing $r_z(w)=d(z,w)$, the triangle inequality gives
the smooth lower support function
\begin{equation*}
\varphi(w)=\frac12\left(\ell+a-r_z(w)\right)^2
\leq\frac12d(q,w)^2
\end{equation*}
for $w$ sufficiently close to $p$, with equality at
$p$.
Here the expression in parentheses is positive near
$p$, and $r_z$ is smooth there since
$0<a<\operatorname{inj}(z)$.
At $p$,
\begin{equation*}
\operatorname{Hess}\varphi
=dr_z\otimes dr_z-\ell\operatorname{Hess}r_z,
\qquad |d\varphi|=\ell.
\end{equation*}
Smoothness of squared distance on a neighborhood of
\begin{equation*}
\left\{(z',w')\in K\times K\mid d(z',w')\leq2a_0\right\}
\end{equation*}
gives a uniform bound on its Hessian in the second
variable.
Since
\begin{equation*}
\operatorname{Hess}\left(\frac12r_z^2\right)
=dr_z\otimes dr_z+r_z\operatorname{Hess}r_z,
\end{equation*}
we obtain
$\|\operatorname{Hess}r_z(p)\|_{\mathrm{op}}\leq C/a$.
Consequently,
$\operatorname{Hess}\varphi(p)\geq-C\ell g_p/a$.
Pulling the lower support back by $E$ and using
the chain rule, together with boundedness of the first
and second derivatives of $E$ on $\overline{B_R(0)}$, yields
\begin{equation*}
D^2\rho_q(x)\geq
-C\left(\frac\ell a+\ell\right)g_\mu
\geq-\frac{Cb(q)}{\delta_q(x)}g_\mu.
\end{equation*}
Together with the upper bound, this proves
\eqref{eq:c1alpha-loss-derivative-bounds} when $p\neq q$.
When $p=q$, smoothness near the diagonal and compactness
of $\overline{B_R^M(\mu)}$ give a uniform Hessian bound,
so the same estimate holds after increasing $C$.

\medskip
\noindent\textbf{Step 2. Some estimates near the cut locus.}
We show that
\begin{equation}
\sup_{x\in B_R(0)}
\mathbb{E}\left[b(X)\mathbf{1}_{\{\delta_X(x)\leq t\}}\right]
\leq Ct^{\alpha_0},\qquad 0<t\leq1,
\label{eq:c1alpha-weighted-cut-probability}
\end{equation}
and
\begin{equation}
\sup_{x\in B_R(0)}
\mathbb{E}\left[
\frac{b(X)}{\delta_X(x)}
\mathbf{1}_{\{\delta_X(x)>a\}}\right]
\leq Ca^{\alpha_0-1},\qquad 0<a<1.
\label{eq:c1alpha-truncated-inverse-moment}
\end{equation}

For $0<t<1\wedge t_0$, H\"older's inequality and
\eqref{eq:c1alpha-cut-probability} give
\begin{align*}
\mathbb{E}\left[b(X)\mathbf{1}_{\{\delta_X(x)\leq t\}}\right]
&\leq
\mathbb{E}\left[b(X)^{2+\eta}\right]^{1/(2+\eta)}
\mathbb{P}\left(\delta_X(x)\leq t\right)^{(1+\eta)/(2+\eta)}\\
&\leq Ct^{\alpha_0}
\end{align*}
uniformly in $x$.
For larger $t\leq1$, the bound follows by increasing
$C$ and using $\mathbb{E}[b(X)]<\infty$.
This proves \eqref{eq:c1alpha-weighted-cut-probability}.
In particular, for every fixed $x\in B_R(0)$,
\begin{equation*}
\mathbb{P}\left(\delta_X(x)=0\right)=0.
\end{equation*}

For $0<a<1$, Tonelli's theorem gives
\begin{align*}
\mathbb{E}\left[
\frac{b(X)}{\delta_X(x)}
\mathbf{1}_{\{\delta_X(x)>a\}}\right]
&=\mathbb{E}\left[b(X)\mathbf{1}_{\{\delta_X(x)>a\}}\right]
+\int_a^1\frac{
\mathbb{E}\left[b(X)\mathbf{1}_{\{a<\delta_X(x)\leq s\}}\right]
}{s^2}\,ds\\
&\leq\mathbb{E}[b(X)]+C\int_a^1s^{\alpha_0-2}\,ds\\
&\leq Ca^{\alpha_0-1},
\end{align*}
because $0<\alpha_0<1$.
This proves \eqref{eq:c1alpha-truncated-inverse-moment}.

\medskip
\noindent\textbf{Step 3. Differentiability of $f$ and H\"older continuity of $\nabla f$.}
We prove that
\begin{equation*}
\nabla f(x)=\mathbb{E}\left[\nabla\rho_X(x)\right],
\qquad
|\nabla f(y)-\nabla f(x)|\leq C|x-y|^{\alpha_0}
\end{equation*}
for $x,y\in B_R(0)$.

At each fixed $x$, define $\nabla\rho_q(x)=0$ when
$\delta_q(x)=0$.
This is a Borel measurable function of $q$.
By Step~2, $\rho_X$ is almost surely smooth near $x$.
Moreover, for sufficiently small $h$ with $x+h\in B_R(0)$,
Lemma~\ref{lem:uniform-semiconcavity} and Step~1 give
\begin{equation*}
\frac{\left|
\rho_X(x+h)-\rho_X(x)-\langle\nabla\rho_X(x),h\rangle
\right|}{|h|}
\leq Cb(X)
\qquad\text{almost surely}.
\end{equation*}
The left-hand side tends to zero almost surely as
$h\to0$, and $b\in L^1(P)$.
Dominated convergence therefore proves differentiability of $f$ at $x$, with
$\nabla f(x)=\mathbb{E}[\nabla\rho_X(x)]$.

Now take $x,y\in B_R(0)$, put $h=y-x$, and first
suppose that $0<|h|<1/(2L)$.
On the event $A=\{\delta_X(x)\leq2L|h|\}$, Steps~1
and~2 give
\begin{equation*}
\mathbb{E}\left[
|\nabla\rho_X(y)-\nabla\rho_X(x)|\mathbf{1}_A\right]
\leq C\mathbb{E}\left[b(X)\mathbf{1}_A\right]
\leq C|h|^{\alpha_0}.
\end{equation*}
On $A^c$, for every $s\in[0,1]$,
\begin{equation*}
\delta_X(x+sh)
\geq\delta_X(x)-L|h|
\geq\frac12\delta_X(x)>0
\end{equation*}
by \eqref{eq:c1alpha-cut-distance-lipschitz}.
Thus $\rho_X$ is smooth along the entire segment from
$x$ to $y$.
The fundamental theorem of calculus and Step~1 yield
\begin{equation*}
|\nabla\rho_X(y)-\nabla\rho_X(x)|\mathbf{1}_{A^c}
\leq C|h|\frac{b(X)}{\delta_X(x)}
\mathbf{1}_{\{\delta_X(x)>2L|h|\}}.
\end{equation*}
Taking expectations and applying
\eqref{eq:c1alpha-truncated-inverse-moment} shows that
the expectation of this expression is at most
$C|h|^{\alpha_0}$.
Combining the two events proves
\begin{equation*}
|\nabla f(y)-\nabla f(x)|\leq C|x-y|^{\alpha_0}
\end{equation*}
when $0<|x-y|<1/(2L)$.
For $|x-y|\geq1/(2L)$, the same bound follows by
increasing $C$ and using
\begin{equation*}
\sup_{z\in B_R(0)}|\nabla f(z)|\leq C\mathbb{E}[b(X)].
\end{equation*}
Hence $f\in C^{1,\alpha_0}(B_R(0))$.
Smoothness of the normal coordinate chart gives
$F\in C^{1,\alpha_0}(B_R^M(\mu))$, completing the proof.
\end{proof}

\section{Examples}
\label{sec:examples}

The following three examples concern Riemannian manifolds.
The first one applies Theorem~\ref{thm:gradient-scaling-limit}
to an anisotropic limit on a flat torus.
The remaining examples exhibit a logarithmic limit outside
its assumptions and failure of every deterministic linear
normalization to yield a nondegenerate limit.

We first give a circular construction used in these
examples. Identify $S^1$ with $[-\pi,\pi)$, equipped
with its geodesic distance, and fix $\delta\in(0,\pi/2)$.
Let $h\colon[0,\delta]\to\mathbb{R}$ be absolutely continuous,
with $h(0)=0$, $h(t)>0$ for $t>0$, and
$0\leq h'(t)<1$ for almost every $t\in(0,\delta)$.
Define a probability distribution by
\begin{equation}\label{eq:circle-example-law}
P(d\theta)
=\left(1-\frac{\delta-h(\delta)}{\pi}\right)\delta_0(d\theta)
+p(\theta)\,d\theta,
\qquad
p(\theta)=
\begin{cases}
\dfrac{1-h'(\pi-|\theta|)}{2\pi},
&\pi-\delta<|\theta|<\pi,\\
0,&\text{otherwise}.
\end{cases}
\end{equation}
Set $p=0$ on the null set where $h'$ is undefined.
Then $p\geq0$ and
\begin{equation*}
\int_{-\pi}^{\pi}p(\theta)\,d\theta
=\frac{\delta-h(\delta)}{\pi}\in(0,1).
\end{equation*}
Symmetry gives $\mathbb{E}[X]=0$, where $X$ is viewed as
a real-valued angle. For $0<x<\pi$, the derivative of
half the squared circular distance is
\begin{equation}\label{eq:circle-distance-derivative}
\frac{\partial}{\partial x}\frac12d_{S^1}(x,\theta)^2
=x-\theta-2\pi\mathbf{1}_{[-\pi,-\pi+x)}(\theta)
\end{equation}
except at the antipodal angle $\theta=x-\pi$, which has
probability zero. The difference quotients are bounded
by $\pi$, so dominated convergence permits differentiation
under the expectation. For $0<x<\delta$,
\begin{equation*}
P([-\pi,-\pi+x))
=\frac{1}{2\pi}\int_0^x(1-h'(t))\,dt
=\frac{x-h(x)}{2\pi};
\end{equation*}
for $\delta\leq x<\pi$, this probability equals
$(\delta-h(\delta))/(2\pi)$. Hence
\begin{equation*}
F'(x)
=x-2\pi P([-\pi,-\pi+x))
=\begin{cases}
h(x),&0<x<\delta,\\
x-\delta+h(\delta),&\delta\leq x<\pi.
\end{cases}
\end{equation*}
The derivative is positive, so symmetry and continuity
show that $0$ is the unique population Fr\'echet mean.
In normal coordinates at $0$,
\begin{equation*}
f'(x)=\operatorname{sgn}(x)h(|x|),\qquad |x|<\delta,
\end{equation*}
where $\operatorname{sgn}(0)=0$.
The function $f'$ is Lipschitz, and $S^1$ is compact.
Thus Theorem~\ref{thm:riemannian-gradient-clt} applies,
with $\sigma^2=\operatorname{Var}(X)>0$.

\subsection{An anisotropic scaling and non-Gaussian limit on a flat space}
\label{subsec:coupled-anisotropic-limit}
The first example uses a flat metric and an
asymmetric version of the circular construction.
A mixed term then contributes at order $n^{-1/2}$ in the
direction with positive Hessian, making its limit non-Gaussian.

\begin{example}
\label{ex:coupled-anisotropic-torus}
Let $M= S^1\times S^1$, whose points are represented by pairs of angles $(a,b)$,
each taken modulo $2\pi$. Equip $M$ with the constant Riemannian metric
\begin{equation*}
  g = da^2+da\,db+db^2.
\end{equation*}
Then $(M,g)$ is a compact flat Riemannian manifold.
Write $d_M$ for its Riemannian distance and set $\mu = (0,0)$.
Near $\mu$, choose representatives $a,b$ close to zero
and introduce the local coordinates
\begin{equation*}
x=\frac{a-b}{2},
\qquad
y=\frac{\sqrt{3}}{2}(a+b).
\end{equation*}
A direct calculation gives
\begin{equation*}
dx^2+dy^2=da^2+da\,db+db^2=g,
\end{equation*}
so the metric is Euclidean in these coordinates.
Fix $\varepsilon\in(0,1)$.
We construct a distribution supported on the two closed
geodesics $\{(\theta,0)\mid\theta\in S^1\}$ and
$\{(0,-\theta)\mid\theta\in S^1\}$.
The parameter $\delta>0$ will be chosen sufficiently small
and then kept fixed as $n\to\infty$.
For $\theta\in[-\pi,\pi)$, set
\begin{equation}\label{eq:coupled-circle-density}
q_\delta(\theta)
=\frac{1-\frac32(1-\varepsilon\operatorname{sgn}(\theta))
             \sqrt{\pi-|\theta|}}{2\pi}
\,\mathbf{1}_{\{\pi-\delta<|\theta|<\pi\}}.
\end{equation}
Choose $\delta<\min\{1,\pi/4\}$ with
$\frac32(1+\varepsilon)\sqrt{\delta}<1$, so that
$q_\delta\geq0$. Set
\begin{equation*}
\begin{aligned}
m_\delta
&=\int_{-\pi}^{\pi}q_\delta(\theta)\,d\theta
=\frac{\delta-\delta^{3/2}}{\pi},\\
M_\delta
&=\int_{-\pi}^{\pi}\theta q_\delta(\theta)\,d\theta
=\varepsilon\left(
\delta^{3/2}-\frac{3}{5\pi}\delta^{5/2}\right)>0,\\
\tau_\delta
&=m_\delta+\frac{2M_\delta}{\pi},
\end{aligned}
\end{equation*}
and define a distribution on $M$ by
\begin{equation}\label{eq:coupled-torus-law}
P_\delta=\left(1-\frac43\tau_\delta\right)\delta_{\{\mu\}}
+\frac{4M_\delta}{3\pi}\left(
\delta_{\{(-\pi/2,0)\}}+\delta_{\{(0,\pi/2)\}}\right)
+\frac23\int_{-\pi}^{\pi}q_\delta(\theta)
\left(\delta_{\{(\theta,0)\}}+\delta_{\{(0,-\theta)\}}\right)
\,d\theta.
\end{equation}
Here $\delta_{\{p\}}$ denotes the unit point mass at $p$, and the
integral term assigns to each Borel set $B\subset M$ the mass
\begin{equation*}
\frac23\int_{-\pi}^{\pi}q_\delta(\theta)
\left[\mathbf{1}_B((\theta,0))+
\mathbf{1}_B((0,-\theta))\right]\,d\theta.
\end{equation*}
Since $\tau_\delta\to0$ as $\delta\downarrow0$, the mass
at $\mu$ is nonnegative for sufficiently small $\delta$.
All other terms are nonnegative, and the definition of
$\tau_\delta$ ensures that the total mass is one.

For the calculation of the Fr\'echet function of $P_\delta$,
introduce the circular probability measure
\begin{equation}\label{eq:coupled-circle-law}
\nu_\delta
=(1-\tau_\delta)\delta_{\{0\}}
+\frac{2M_\delta}{\pi}\delta_{\{-\pi/2\}}
+q_\delta(\theta)\,d\theta.
\end{equation}
In terms of this measure, $P_\delta$ can be written as
\begin{equation}\label{eq:coupled-torus-law-via-circle}
P_\delta
=\frac23\int_{-\pi}^{\pi}
\left[
\delta_{\{(\theta,0)\}}+\delta_{\{(0,-\theta)\}}
\right]\nu_\delta(d\theta)
-\frac13\delta_{\{\mu\}}.
\end{equation}
Indeed, the integral term agrees with $P_\delta$ away from
$\mu$ and assigns mass $4(1-\tau_\delta)/3$ to $\mu$;
subtracting $1/3$ gives the required mass
$1-4\tau_\delta/3$.

Let $\Theta$ have distribution $\nu_\delta$.
Then $\mathbb{E}[\Theta]=0$, where $\Theta$ is represented
by an angle in $[-\pi,\pi)$.
Write
\begin{equation*}
J(t)=\frac12\int_{-\pi}^{\pi}
d_{S^1}(t,\theta)^2\,\nu_\delta(d\theta)
=\frac12\mathbb{E}[d_{S^1}(t,\Theta)^2].
\end{equation*}
\eqref{eq:circle-distance-derivative} gives
\begin{equation}\label{eq:coupled-circle-local}
J'(t)=(\operatorname{sgn}(t)+\varepsilon)|t|^{3/2},
\qquad |t|<\delta,
\end{equation}
and $tJ'(t)>0$ for $0<|t|<\pi/2$.
In particular, $J(t)>J(0)$ on this interval.

On $U=\{(a,b)\mid |a|,|b|<\pi/4\}$, the distance can be
computed by allowing each angular difference to change by
an integer multiple of $2\pi$. Completing the square gives
\begin{equation*}
d_M((a,b),(\theta,0))^2
=\min_{k,\ell\in\mathbb Z}
\left\{
\left(a-\theta+2\pi k+\frac{b+2\pi\ell}{2}\right)^2
+\frac34(b+2\pi\ell)^2
\right\}.
\end{equation*}
For $\ell\ne0$, this expression is at least
$\frac34(2\pi-|b|)^2>\pi^2+\frac34b^2$,
whereas setting $\ell=0$ and minimizing over $k$ gives
at most $\pi^2+\frac34b^2$.
Thus the minimum is attained with $\ell=0$, and minimizing
over $k$ gives the first identity below.
Interchanging the two coordinates gives the second:
\begin{equation}\label{eq:coupled-distance-identities}
\begin{aligned}
d_M((a,b),(\theta,0))^2
&=d_{S^1}(a+b/2,\theta)^2+\frac34b^2,\\
d_M((a,b),(0,\theta))^2
&=d_{S^1}(b+a/2,\theta)^2+\frac34a^2.
\end{aligned}
\end{equation}
Let $F_\delta$ be the Fr\'echet function of $P_\delta$ and
set $f_\delta=F_\delta\circ\exp_\mu$.
Writing
\begin{equation*}
a=x+\frac{y}{\sqrt3},
\qquad
b=-x+\frac{y}{\sqrt3},
\end{equation*}
set
\begin{equation*}
s_+=a+\frac b2=\frac{x+\sqrt3y}{2},
\qquad
s_-=b+\frac a2=\frac{-x+\sqrt3y}{2}.
\end{equation*}
Using \eqref{eq:coupled-torus-law-via-circle} and
\eqref{eq:coupled-distance-identities}, we obtain
\begin{equation*}
\begin{aligned}
f_\delta(x,y)
&=\frac13\int_{-\pi}^{\pi}
\left[
d_M((a,b),(\theta,0))^2+d_M((a,b),(0,-\theta))^2
\right]\nu_\delta(d\theta)
-\frac16(x^2+y^2)\\
&=\frac13\int_{-\pi}^{\pi}
\left[
d_{S^1}(s_+,\theta)^2+d_{S^1}(-s_-,\theta)^2
\right]\nu_\delta(d\theta)
+\frac14(a^2+b^2)-\frac16(x^2+y^2)\\
&=\frac23\left[J(s_+)+J(-s_-)\right]+\frac{x^2}{3}.
\end{aligned}
\end{equation*}
Here we used
$d_{S^1}(s_-,-\theta)=d_{S^1}(-s_-,\theta)$ and
$a^2+b^2=2x^2+2y^2/3$.
Since $f_\delta(0)=4J(0)/3$, this gives
\begin{equation}\label{eq:coupled-frechet-local}
f_\delta(x,y)-f_\delta(0)
=\frac{x^2}{3}
+\frac23\left[J(s_+)+J(-s_-)-2J(0)\right].
\end{equation}
The subtraction in the first line removes the excess mass
$1/3$ at $\mu$ contributed by the two copies of $\nu_\delta$,
each weighted by $2/3$.
Here we used $a+b/2=s_+$, $b+a/2=s_-$, and
$d_{S^1}(s_-,-\Theta)=d_{S^1}(-s_-,\Theta)$.

Since $|s_\pm|<3\pi/8$ on $U$, $J(s_+),J(-s_-)\geq J(0)$, with simultaneous equality only at $(x,y)=(0,0)$.
Moreover, $P_\delta(M\setminus\{\mu\})=4\tau_\delta/3\to0$
implies $F_\delta\to d_M(\,\cdot\,,\mu)^2/2$ uniformly on
the compact manifold $M$.
Thus all global minimizers lie in $U$ for sufficiently
small $\delta$, and $\mu$ is the unique population mean.
Fix such a $\delta$ and write $f=f_\delta$.

By \eqref{eq:coupled-circle-local}, $f\in C^2$ near $0$,
with $D^2f(0)=\operatorname{diag}(2/3,0)$.
Theorem~\ref{thm:riemannian-gradient-clt} therefore gives
\begin{equation*}
\sqrt{n}\nabla f(z_n)\xrightarrow{d}Z,
\qquad
Z\sim\mathcal{N}\left(0,
\operatorname{diag}(\sigma^2/3,\sigma^2)\right),
\end{equation*}
where $z_n=(x_n,y_n)$ denotes the localized normal
coordinates of sample Fr\'echet means and
$\sigma^2=\mathbb{E}[\Theta^2]>0$.
The covariance follows from
$\log_\mu(\theta,0)=\frac\theta2(1,\sqrt3)$ and
$\log_\mu(0,-\theta)=\frac\theta2(1,-\sqrt3)$.

Set $A_n=\operatorname{diag}(n^{1/2},n^{1/3})$ and
$\beta=(\sqrt3/2)^{1/2}$.
Differentiating \eqref{eq:coupled-frechet-local} gives
\begin{equation*}
\nabla f(x,y)
=
\begin{pmatrix}
\displaystyle
\frac23x+\frac13\left[J'(s_+)+J'(-s_-)\right]\\[2mm]
\displaystyle
\frac1{\sqrt3}\left[J'(s_+)-J'(-s_-)\right]
\end{pmatrix}.
\end{equation*}
Substituting $(x,y)=A_n^{-1}(u,v)$, we have
\begin{equation*}
s_\pm(A_n^{-1}(u,v))
=n^{-1/3}\left(\frac{\sqrt3}{2}v
\pm\frac12n^{-1/6}u\right).
\end{equation*}
By \eqref{eq:coupled-circle-local}, the factor $n^{-1/3}$
in these arguments becomes $n^{-1/2}$ after applying $J'$.
Consequently, locally uniformly in $(u,v)$,
\begin{equation}\label{eq:coupled-gradient-scaling}
\sqrt n\nabla f(A_n^{-1}(u,v))
\longrightarrow
\begin{pmatrix}
\displaystyle
\frac23u+\frac{\varepsilon\beta}{\sqrt3}|v|^{3/2}\\[2mm]
\displaystyle
\beta\operatorname{sgn}(v)|v|^{3/2}
\end{pmatrix}
=\Lambda(u,v).
\end{equation}
Since $\Lambda$ is a triangular homeomorphism,
Theorem~\ref{thm:gradient-scaling-limit} yields
\begin{equation}\label{eq:coupled-anisotropic-limit}
\left(
\frac{2}{\sqrt3\,\sigma}\sqrt{n}\,x_n,\,
\left(\frac{\beta}{\sigma}\right)^{2/3}n^{1/3}y_n
\right)
\xrightarrow{d}
\left(G-\varepsilon|H|,\,
\operatorname{sgn}(H)|H|^{2/3}\right),
\end{equation}
where $G$ and $H$ are independent standard normal variables.
Although the first coordinate has the same $\sqrt{n}$ scaling
as in the classical CLT, its limiting distribution is
non-Gaussian and has nonzero mean.
\end{example}

\subsection{Logarithmic smeariness}
\label{subsec:logarithmic-smeariness}

Logarithmic smeariness on the circle was exhibited in
\cite[Example~2]{HundrieserEltznerHuckemann2026}.
We use a variant with the remaining probability mass
placed at $0$.
We derive its logarithmic limit directly from
Theorem~\ref{thm:riemannian-gradient-clt}.

\begin{example}
\label{ex:logarithmic-rate}
Fix $\nu>0$ and use \eqref{eq:circle-example-law} with
\begin{equation*}
h(t)=\begin{cases}
\exp(-t^{-\nu}),&t>0,\\
0,&t=0.
\end{cases}
\end{equation*}
Since $h'(t)\to0$ as $t\downarrow0$, a sufficiently
small $\delta$ satisfies the requirements of that
construction. In particular, $0$ is the unique
population Fr\'echet mean, and
\begin{equation*}
f'(x)=\begin{cases}
\operatorname{sgn}(x)\exp(-|x|^{-\nu}),&x\neq0,\\
0,&x=0,
\end{cases}
\qquad |x|<\delta.
\end{equation*}
The function $f$ is smooth near $0$ and
\begin{equation*}
\sqrt{n} f'(x_n)\xrightarrow{d}Z,
\qquad Z\sim\mathcal{N}(0,\sigma^2),\quad\sigma^2>0.
\end{equation*}
Nevertheless, the sample means have the logarithmic limit
\begin{equation*}
(\log\sqrt{n})^{1/\nu}x_n
\xrightarrow{d}\frac12\delta_{-1}+\frac12\delta_1.
\end{equation*}
\end{example}

\begin{proof}
Choose the cutoff radius used in the definition of $x_n$ smaller than $\delta$
and put $W_n=\sqrt{n} f'(x_n)$.
Since $W_n\xrightarrow{d}Z$ and $\mathbb{P}(Z=0)=0$,
the Portmanteau theorem gives
$\mathbb{P}(W_n=0)\to0$.
Moreover, $\log|W_n|=O_{\mathbb{P}}(1)$, with any fixed
finite value assigned on $\{W_n=0\}$.
Indeed, tightness of $W_n$ controls its upper tail,
while convergence to a continuous distribution with no
mass at $0$ controls the probability that $|W_n|$ is small.
On $\{W_n\neq0\}$,
\begin{equation*}
|x_n|^{-\nu}=\log\sqrt{n}-\log|W_n|,
\qquad \operatorname{sgn}(x_n)=\operatorname{sgn}(W_n).
\end{equation*}
It follows that
\begin{equation*}
(\log\sqrt{n})^{1/\nu}|x_n|
=\left(1-\frac{\log|W_n|}{\log\sqrt{n}}\right)^{-1/\nu}
\xrightarrow{\mathbb{P}}1.
\end{equation*}
Combining this with
$\operatorname{sgn}(W_n)\xrightarrow{d}\operatorname{sgn}(Z)$
proves the assertion.
\end{proof}

This normalization does not meet the assumptions of
Theorem~\ref{thm:gradient-scaling-limit}.
Indeed, with $A_n=(\log\sqrt{n})^{1/\nu}I$ for $n\geq2$, for every
fixed $y>0$ and all sufficiently large $n$,
\begin{equation*}
\sqrt{n} f'(A_n^{-1}y)=n^{(1-y^{-\nu})/2}.
\end{equation*}
This diverges for $y>1$, so there is no finite locally
uniform limit.

\subsection{Failure of a nondegenerate limit under linear normalization}
\label{subsec:oscillatory-limits}

\begin{example}
\label{ex:oscillatory-no-limit}
Choose $a,\varepsilon>0$ such that
\begin{equation*}
0<a-\sqrt{2}\varepsilon,
\qquad a+\sqrt{2}\varepsilon<1,
\end{equation*}
and use \eqref{eq:circle-example-law} with
\begin{equation*}
h(t)=\begin{cases}
t\left(a+\varepsilon\sin\log\frac1t\right),&t>0,\\
0,&t=0.
\end{cases}
\end{equation*}
Then $0$ is the unique population Fr\'echet mean,
$f\in C^{1,1}$ near $0$, and
\begin{equation*}
f'(x)=x\left(a+\varepsilon\sin\log\frac1{|x|}\right),
\qquad 0<|x|<\delta.
\end{equation*}
The gradient satisfies
\begin{equation*}
\sqrt{n} f'(x_n)\xrightarrow{d}Z,
\qquad Z\sim\mathcal{N}(0,\sigma^2),\quad\sigma^2>0.
\end{equation*}
However, $\sqrt{n} x_n$ is tight and has no distributional
limit. More strongly, no deterministic sequence of
linear isomorphisms $A_n\colon\mathbb{R}\to\mathbb{R}$
makes $A_nx_n$ converge to a nondegenerate distribution.
\end{example}

\begin{proof}
\noindent\textbf{Step 1. Verification of the assumptions and tightness.}
We verify the circular construction and show that
$\sqrt{n} x_n=O_{\mathbb{P}}(1)$.

For $t>0$,
\begin{equation*}
h'(t)=a+\varepsilon\sin\log\frac1t
-\varepsilon\cos\log\frac1t,
\end{equation*}
so $0<a-\sqrt{2}\varepsilon\leq h'(t)
\leq a+\sqrt{2}\varepsilon<1$.
The construction and the manifold theorem therefore apply.
Put $W_n=\sqrt{n} f'(x_n)$, taking the localization
radius smaller than $\delta$. Since
\begin{equation*}
(a-\varepsilon)|x_n|\leq|f'(x_n)|
\leq(a+\varepsilon)|x_n|,
\end{equation*}
the convergence $W_n\xrightarrow{d}Z$ proves tightness.
The oscillation of $f'(x)/x$ also shows that $f$ is not
twice differentiable at $0$.

\medskip
\noindent\textbf{Step 2. Distinct subsequential limits.}
We identify a family of limits indexed by a phase.
For $\vartheta\in\mathbb{R}$, define
\begin{equation*}
H_\vartheta(y)=\begin{cases}
y\left(a+\varepsilon
\sin\left(\vartheta+\log\frac1{|y|}\right)\right),&y\neq0,\\
0,&y=0.
\end{cases}
\end{equation*}
Each $H_\vartheta$ is an odd increasing homeomorphism
of $\mathbb{R}$, with derivatives bounded below by
$a-\sqrt{2}\varepsilon>0$ away from $0$.
Moreover,
\begin{equation*}
W_n=H_{\frac12\log n}(\sqrt{n} x_n).
\end{equation*}
Let $\Phi$ denote the standard normal distribution
function. Uniform convergence of the distribution
functions of $W_n$ gives, for every fixed $y\in\mathbb{R}$,
\begin{equation*}
\mathbb{P}(\sqrt{n} x_n\leq y)
=\Phi\left(\frac{H_{\frac12\log n}(y)}{\sigma}\right)+o(1).
\end{equation*}
For $\vartheta\in[0,2\pi)$, take
$n_k=\lfloor\exp(4\pi k+2\vartheta)\rfloor$.
Then
\begin{equation*}
\sqrt{n_k}x_{n_k}\xrightarrow{d}H_\vartheta^{-1}(Z).
\end{equation*}
At $y=1$, the limiting distribution functions for
$\vartheta=0$ and $\vartheta=\pi/2$ are, respectively,
$\Phi(a/\sigma)$ and $\Phi((a+\varepsilon)/\sigma)$.
Thus the full sequence cannot converge in distribution.

\medskip
\noindent\textbf{Step 3. Exclusion of other linear normalizations.}
We show that no deterministic scalar normalization
has a nondegenerate distributional limit.
Write $A_nx=a_nx$, where $a_n\neq0$, and suppose
that $a_nx_n$ converges to a nondegenerate distribution.
Set $c_n=|a_n|/\sqrt{n}$.
No subsequence of $c_n$ can tend to zero, by tightness
of $\sqrt{n} x_n$.
Nor can a subsequence tend to infinity: for any $M>0$,
along such a subsequence,
\begin{equation*}
\mathbb{P}(|a_nx_n|>M)
\geq\mathbb{P}\left(|W_n|>
\frac{(a+\varepsilon)M}{c_n}\right)\longrightarrow1,
\end{equation*}
contradicting tightness. Hence $c_n$ is bounded and
bounded away from zero.

Along further subsequences of the phase-$0$ and
phase-$\pi/2$ sequences from Step~2, let
$c_n\to c_0$ and $c_n\to c_1$, respectively.
Passing to fixed signs of $a_n$ if necessary, symmetry
of $H_\vartheta^{-1}(Z)$ implies
\begin{equation*}
c_0H_0^{-1}(Z)\stackrel d=c_1H_{\pi/2}^{-1}(Z).
\end{equation*}
More generally, if $c,d>0$ and
$cH_\vartheta^{-1}(Z)\stackrel d=dH_\psi^{-1}(Z)$,
strict monotonicity of $\Phi$ gives
$H_\vartheta(y/c)=H_\psi(y/d)$ for every $y$.
For $y>0$, putting $t=\log(1/y)$ yields
\begin{equation*}
\frac1c\left(a+\varepsilon\sin(\vartheta+\log c+t)\right)
=\frac1d\left(a+\varepsilon\sin(\psi+\log d+t)\right),
\qquad t\in\mathbb{R}.
\end{equation*}
Averaging over a period gives $c=d$, and equality of
the sinusoidal terms then gives
$\vartheta=\psi$ modulo $2\pi$.
This is impossible for $0$ and $\pi/2$, proving the claim.
\end{proof}

The example shows that the gradient CLT can remain
valid even when no linear normalization produces a
nonconstant limiting random variable.

\begin{acknowledgments}
The author thanks Kenta Misumi for helpful discussions during seminars
and Makiko Sasada for valuable comments during seminars.
\end{acknowledgments}

\section*{Statement on AI use}
The author used OpenAI's generative AI tools to assist with manuscript organization and exposition, literature searches, and calculations, and to obtain feedback on mathematical arguments. GPT-5.6 Sol was used for mathematical discussions. The author takes full responsibility for the final content of the manuscript.

\bibliography{CLT_ref}
\end{document}